\documentclass{article}
\usepackage{graphicx} 

\usepackage[a4paper,left=2.5cm,top=3cm,right=2.5cm,bottom=3cm]{geometry}

\usepackage{fancyhdr}
\makeatletter

\newcommand\backmatter{%
  \if@openright
    \cleardoublepage
  \else
    \clearpage
  \fi
  \pagenumbering{alpha}
}
\makeatother

\usepackage{algorithm}
\usepackage[noend]{algpseudocode}
\usepackage{amsfonts}
\usepackage{amsmath}
\usepackage{amssymb}
\usepackage{amstext}
\usepackage{amsthm}
\usepackage{array}
\usepackage{authblk}
\usepackage{booktabs}
\usepackage[colorlinks=true, linkcolor = black, citecolor = dgreen]{hyperref} 
\usepackage{cleveref}
\usepackage[usenames,dvipsnames]{xcolor}
\usepackage{colortbl}
\usepackage{emptypage}
\usepackage[shortlabels]{enumitem}
\usepackage[english]{babel}
\usepackage{eqparbox}
\usepackage{extarrows}
\usepackage{eucal}
\usepackage{float}
\usepackage{graphicx} \graphicspath{ {img/} }
\usepackage{import}
\usepackage{latexsym}
\usepackage{marvosym}
\usepackage{mathrsfs}
\usepackage{mathtools}
\usepackage{multirow}
\usepackage[numbers]{natbib}
\usepackage{overpic}
\usepackage{setspace}
\usepackage{soul}
\usepackage{subfigure}
\usepackage{todonotes}
\usepackage{tikz}
  \usetikzlibrary{matrix,calc}
  \usetikzlibrary{decorations.markings,decorations.pathreplacing}
  \usetikzlibrary{patterns}
\usepackage{url}
\usepackage{verbatim}

\usepackage{xspace}

\setlist[enumerate]{label = (\arabic*), ref = \arabic*}
\crefformat{enumi}{item~(#2#1#3)}
\Crefformat{enumi}{Item~(#2#1#3)}
\crefrangeformat{enumi}{items~(#3#1#4) to~(#5#2#6)}
\Crefrangeformat{enumi}{Items~(#3#1#4) to~(#5#2#6)}

\definecolor{Gray}{gray}{0.9}
\definecolor{dgreen}{rgb}{0,.8,0}
\definecolor{red}{HTML}{D62728}
\definecolor{blue}{RGB}{ 0, 109, 219}

\theoremstyle{plain}
\newtheorem{theorem}{Theorem}[section]
\newtheorem{lemma}[theorem]{Lemma}
\newtheorem{corollary}[theorem]{Corollary}
\newtheorem{proposition}[theorem]{Proposition}
\theoremstyle{definition}
\newtheorem{definition}[theorem]{Definition}

\newtheorem{example}[theorem]{Example}

\theoremstyle{remark}

\newtheorem{notation}[theorem]{Notation}

\newtheorem{construction}[theorem]{Construction}
\newtheorem{observation}[theorem]{Observation}

\newcommand{\bdy}{\partial}
\renewcommand{\setminus}{{\smallsetminus}}

\newcommand{\dualD}{\pi(L)^*}

 {\begin{list}{}%
    {\setlength{\leftmargin}{#1}}%
  \item[]%
 }
{\end{list}}

\tikzset{square matrix/.style={
    matrix of nodes,
    column sep=-\pgflinewidth, row sep=-\pgflinewidth,
    nodes={draw,
      minimum height=15pt,
      anchor=center,
      text width=13pt,
      align=center,
      inner sep=0pt
    },
  },
  square matrix/.default=2cm
}

\colorlet{trans}{blue!50}
\colorlet{trans2}{red!20}

\algnewcommand\algorithmicforeach{\textbf{for each}}
\algdef{S}[FOR]{ForEach}[1]{\algorithmicforeach\ #1\ \algorithmicdo}

\title{Octahedral decomposition of alternating links in thickened surfaces}

\author{Lecheng Su}
\date{}

\begin{document}
\maketitle
\begin{abstract}
    The octahedral decomposition of classical link complements has been considered and utilised by Weeks, Rubinstein, Sakuma etc. It is even more natural to consider the octahedral decomposition of virtual link complements. In this paper, we explore and generalise the octahedral decomposition used in \emph{snappy} to links in thickened surface. Using the decomposition, we prove nonpositive curvature of the complement and essential-ness of edges in the decomposition.
\end{abstract}

\section{Introduction}

An \emph{alternating link} has a diagram for which crossings alternate over and under when following each component of the link. Alternating links can be studied using geometric techniques, e.g., Thurston~\cite{thurston1982three} on the classification of knots in the 3-sphere, and these are particularly effective in the alternating case. For example, in 1984, Menasco~\cite{Menasco:AltLinks} showed that any classical link in the 3-sphere with a prime alternating diagram is either a $(2, q)$-torus knot, or it is hyperbolic.

To study hyperbolicity and other geometric properties for a link complement, we cut it into geometric or combinatorial pieces. The method of decomposing a classical link complement into polyhedral piece, coming from Thurston's work~\cite{Thurston:Notes}, has been generalised by Howie-Purcell~\cite{HowiePurcell} to links with a diagram on an orientable surface in general 3-manifolds. Furthermore, in their 2024 work, Purcell and Su~\cite{purcell2024alternating} generalised the theory to links with a diagram on nonorientable surfaces. They showed that these class of link complements can be obtained by decorating the double cover of the surface and gluing to itself.

There exists another commonly used decomposition method for a link exterior or complement in $S^3$ called the \emph{octahedral decomposition}, where one places one octahedron on each crossing and glues the octahedra together to get back the link complement in $S^2 \times I$, after capping off a pair of 2-spheres with two balls.

In their 1992 paper~\cite{aitchison1992cusp}, Aitchison and Rubinstein study cusp structure of classical alternating links using cubical decomposition coming from the octahedral decomposition, together with the classical polyhedral decomposition. Earlier in the 1980s, in his PhD thesis, Weeks~\cite{weeks1985hyperbolic} utilizes a triangulation that can be derived from octahedral decomposition as a way of triangulating a link complement. He used this method for calculating the hyperbolic structure of link complements in $S^3$. It was implemented in the famous 3-manifold geometry software \emph{SnapPea} based on the work of Weeks in his thesis, now maintained as \emph{Snappy}~\cite{culler2016snappy} in Python language.

More recently, in their 2018 work~\cite{sakuma2018application}, Sakuma and Yokota carefully described the octahedral decomposition and utilized it to obtain a cubical decomposition of alternating link exterior. They managed to obtain non-positive curvature and volume result for alternating link exterior. Also in their 2018 work~\cite{kim2018octahedral}, Kim-Kim-Yoon utilized the same octahedral decomposition and put labels on the octahedra to derive a system of pseudo-hyperbolic developing maps. They study a canonical representation for alternating links and Ptolemy equations in the 2018 work and its sequel~\cite{kim2023octahedral}.

In his 2018 work~\cite{sakai2018characterization}, Sakai used the cubical decomposition from Sakuma-Yokota's work to characterize alternating link exterior using checkerboard surfaces, obtaining a result similar to Howie~\cite{howie2017characterisation} and Greene~\cite{MR3694566}, but in the language of cube complexes.

Our methods here generalise those of~\cite{kim2018octahedral}~\cite{sakuma2018application}~\cite{sakai2018characterization}. The idea is to associate the link exterior in thickened surfaces $S\times I$ or $S\widetilde{\times} I$ with a cube complex, depending on the orientability of $S$, which has almost the same topology as the exterior, except for a few vertices coming from coning the boundary surface $\widetilde{S}$.

We will prove the following theorem about the geometry of alternating links in thickened orientable surfaces:
\begin{theorem}\label{Thm:npc}
	Let $L$ be link in $S\times I$ or $S\widetilde{\times} I$ with a reduced alternating diagram $D$ on $S$, such that $S\times I \setminus L$ or $S\widetilde{\times} I$ is hyperbolic. Let $\mathcal{C}$ be the cube complex associated to the octahedral decomposition. Suppose further that $e(D,S)\geq 4$. Then
	\begin{enumerate}
		\item The cube complex $\mathcal{C}$ is nonpositively curved.
		\item The edges of $\mathcal{C}$ are essential.
	\end{enumerate}
\end{theorem}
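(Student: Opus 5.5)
The plan is to follow the Sakuma--Yokota strategy and apply Gromov's link condition. Each octahedron at a crossing of $D$ is subdivided into four cubes (or, equivalently, the octahedral decomposition is dualised), which produces the cube complex $\mathcal{C}$. Nonpositive curvature then reduces to a purely combinatorial check: for every vertex $v$ of $\mathcal{C}$, the link $\mathrm{lk}(v)$ must be a flag simplicial complex. Since $\mathcal{C}$ has only finitely many vertex types, I will first classify the vertices. They are:
\begin{enumerate}
\item the centres of the octahedra, one per crossing;
\item midpoints of octahedral edges, which correspond to edges of $D$ or to the over/under arcs;
\item face-centres, which correspond to the regions of $D$ on $S$ (checkerboard faces);
\item the finitely many cone vertices coming from coning the boundary surfaces $\widetilde{S}$, or their images in the $S\widetilde{\times} I$ case.
\end{enumerate}
In the nonorientable case I would pass to the orientation double cover. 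There the diagram lifts to an alternating diagram on $\widetilde S$, and the cube complex lifts to a complex on which $\mathbb{Z}/2$ acts freely. The link condition is local, so it is enough to verify it upstairs.

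For each vertex type I will describe $\mathrm{lk}(v)$ explicitly. The link of a crossing-centre is the boundary of an octahedron, which is flag. For an edge-midpoint the link is a suspension of a cycle whose length is governed by the two crossings at the ends of the edge, and flagness follows because that cycle has length at least $4$.

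The critical vertices are the region vertices and the cone vertices. At a cone vertex, the link is a cellulation of $\widetilde S$, essentially the graph built from the projection $D$ together with its dual checkerboard structure. Flagness of this link becomes two statements:
\begin{enumerate}
\item there are no empty triangles;
\item there are no empty squares or bigons, i.e.\ no short cycles in the relevant graph bounding non-cells.
\end{enumerate}
Short closed curves in the link correspond to simple closed curves on $S$ meeting $D$ in few points. Such curves are ruled out by three hypotheses: reducedness excludes nugatory crossings, i.e.\ curves meeting $D$ once; the hyperbolicity assumption, via weak primeness, excludes disc-bounding curves meeting $D$ twice; and $e(D,S)\ge 4$ excludes essential curves meeting $D$ in fewer than $4$ points. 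I expect this step to be the main obstacle. The difficulty is translating each potential non-flag configuration (an empty simplex of dimension $\geq 2$) into a curve on $S$ that meets $D$ in at most $3$ points. One must also check that this curve is either inessential, in which case primeness or reducedness gives a contradiction, or essential, in which case $e(D,S)\ge4$ gives a contradiction. I will handle this by a case analysis on how many of the link vertices correspond to crossings versus regions, using the alternating property to fix the combinatorics of each cube corner.

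For part (2), nonpositive curvature gives that the universal cover $\widetilde{\mathcal{C}}$ is CAT(0). Local geodesics in $\mathcal{C}$ are therefore not null-homotopic rel endpoints, and I will show that each edge of $\mathcal{C}$ is a local geodesic. For a path consisting of a single edge, this means checking that at each endpoint, the two directions are at angle $\ge \pi$ in the link metric, where every edge of the all-right spherical complex has length $\pi/2$. This holds if the corresponding points in the link are at combinatorial distance $\ge 2$, i.e.\ not joined by an edge. That follows from the same flagness analysis together with the fact that the two ends of an edge never lie in a common cube. Essentiality, meaning an edge is not homotopic rel boundary into $\partial$ or a cusp, then follows by lifting to $\widetilde{\mathcal{C}}$: geodesics there are unique and edges are distinct geodesics. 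Finally, I will relate the result back to $S\times I\setminus L$, using that $\mathcal{C}$ differs from the exterior only at the cone points.
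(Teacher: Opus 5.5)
Your treatment of the cone points is essentially the paper's argument (Lemma~\ref{Lem:subdivide} and the proof of Proposition~\ref{prop:npc}): $Link(P_\pm)$ is a triangulation of the boundary surface built from $\pi(L)^*$, and a failure of flagness produces a curve meeting $D$ fewer than four times. Your use of reducedness and weak primeness for inessential short curves is more careful than the paper, which cites only $e(D,S)\ge 4$. The problem is that the complex you build around this is not the $\mathcal{C}$ of the theorem. After coning $S\times\{\pm1\}$ to $P_\pm$, the paper cuts each octahedron (truncated only at its top and bottom vertices on $L$) along its middle square, so each half is a cube. The only vertices are then $P_+$, $P_-$ and the truncation vertices on $\partial N(L)$. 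Regions of $D$ appear as the vertical edges from $P_-$ to $P_+$, not as face-centres. So your vertex types 1--3 (octahedron centres, edge midpoints, region face-centres) are not vertices of $\mathcal{C}$, and there are no ``critical region vertices''; the planned link computations for them check nothing in the theorem. Conversely, you omit the vertices on $\partial N(L)$. Their link is a cone on a $4$-cycle in $\mathcal{C}$, and the suspension of a $4$-cycle in the double $\hat{\mathcal{C}}$; checking these is the other half of the paper's proof.

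Your flag criterion at $P_\pm$ is also misstated. The vertices of $Link(P_+)$ are the regions together with one vertex per crossing. Crossing vertices are pairwise non-adjacent, and every triangle is the cone from a crossing vertex over a side of its square in $\pi(L)^*$. So the only obstructions are loops or double edges, a $3$-cycle of regions in $\pi(L)^*$, or a dual edge joining opposite corners of a crossing square. Each of these gives a curve meeting $D$ at most three times. Empty $4$-cycles play no role, since flagness only forbids empty simplices. Nor could you exclude them: when $e(D,S)=4$ there are essential $4$-cycles in $\pi(L)^*$, so your condition (2) as stated would fail.

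For part (2), a single edge is automatically a local geodesic, so an angle check ``at each endpoint'' of one edge is not meaningful. Your non-adjacency test is the right one for a concatenation of edges, and that is exactly where the paper needs it, in the double $\hat{\mathcal{C}}$. There, an edge from $P_\pm$ to $\partial N(L)$ followed by its mirror image passes through a boundary vertex where the two directions are the two suspension points, at distance $\pi$. This makes the doubled path a local geodesic, which is how the paper gets essentiality. You therefore need to set up $\hat{\mathcal{C}}$ and prove it nonpositively curved, not just $\mathcal{C}$.
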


This is a similar result obtained by Sakuma-Yokota in their 2018 work~\cite{sakuma2018application}. However, instead of considering classical knots, we look at links in thickened surfaces. In the classical setting, Weeks~\cite{weeks2005computation} has to drill two tubes to connect the link diagram to the two balls since they also start with the thickened 2-sphere $S^2\times I$ and they want to recover the link complement in $S^3$. It is not the case for links in thickened surfaces, where we don't need to do anything since we start with considering the thickened surface. 

\emph{Essential edges} of a polyhedral decomposition or triangulation can lead to solutions to a hyperbolic structure or representation, since edges of this kind will lift to geodesics in hyperbolic 3-space $\mathbb{H}^3$ if a knot complement is hyperbolic. People have been trying to obtain essential triangulations using different methods in 3-manifold topology, for example, Hodgson-Rubinstein-Segerman-Tillmann~\cite{hodgson2015triangulations} utilize one-vertex triangulation, Heegaard splitting etc to obtain an essential or strongly essential triangulation. Sakuma-Yokota~\cite{sakuma2018application} also utilized the essential triangulation coming from the octahedral decomposition to give an explicit solution to hyperbolic equations. Note that a similar result will be given in Subsection~\ref{subsec:nonoct}, which indicates that an alternating link complement in a thickened nonorientable surface also admits a nonpositively curved cubing.

Nonpositive curvature is an important aspect in geometric group theory, especially after the proof of virtual Haken conjecture using nonpositively curved cube complexes. Wise~\cite{wise2006subgroup} has studied the relationship between prime alternating diagrams on $S^2$ and the nonpositive curvature of the classical two-dimensional \emph{Dehn complex}. Some interesting results have also been published in the literature on the two-dimensional nonpositive curvature phenomena of the link complement in thickened surfaces. The coned Dehn complex is well studied by Harlander~\cite{harlander2003generalized}, yet they construct the Dehn complex differently. They directly consider a deformation retract of the link complement to obtain a ``2-skeleton" that captures the information for the fundamental group $\pi_1(S\times I\setminus L)$. Their retraction from a link in thickened surface is shown in Figure~\ref{Fig:harlander}. In their retraction, a basic block is coming from a crossing in the thickened surface. It is not obvious to show that their definition of Dehn complex gives the same as the classical defintion. However it will not be shown in this paper.

\begin{figure}
	\centering
	\includegraphics[width=0.3\linewidth]{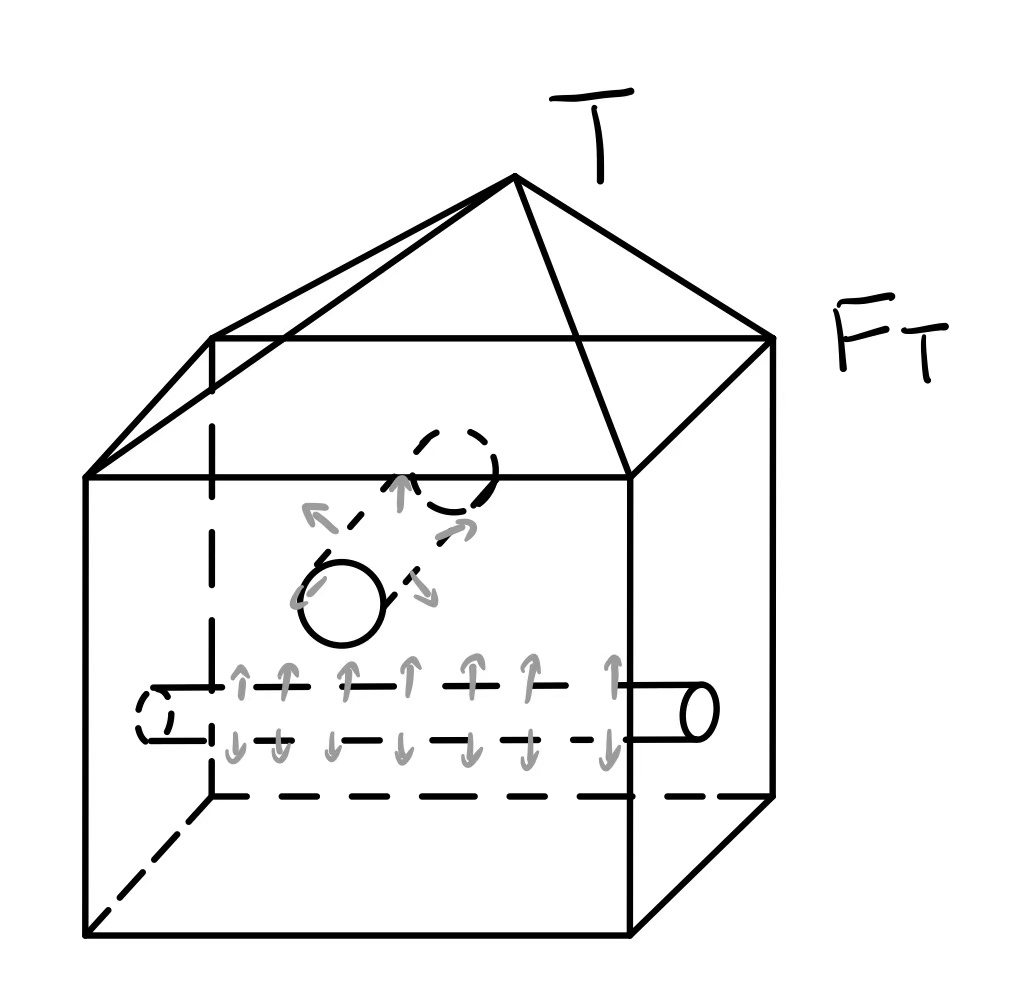}
	\caption{The retraction process from Harlander's work to obtain the 2-complex}
	\label{Fig:harlander}
\end{figure}

\subsection{Virtual knots and links}
In their 1999 work, Kauffman~\cite{Kauffman:Virtual} introduced the new class of knots called \emph{virtual knots}, where in addition to classical crossings which can be put on different hemispheres of a crossing ball~\cite{Menasco:AltLinks}, they have one extra kind of crossing called \emph{virtual crossings}. The class of virtual crossings are not really crossings. They can rather be viewed as two strands running through different parts of a handle, forming a “crossing” when viewed from outside of or far away from a handle.

Except from a planar, diagrammatic point of view, virtual knots can also be thought of as knots in thickened closed orientable surfaces, where one attaches a handle to each virtual crossing. However, this is not the optimal way of building the \emph{projection surface}. In their 2002 work~\cite{kamada2000abstract}, Carter, Kamada and Saito introduced the following notion so that we have an equivalence relation for virtual knots:

\begin{definition}[stable equivalence]
    Two virtual knots are \emph{stably equivalent} if their underlying projection surfaces are related by deletions and additions of one-handles that do not contain any portion of the diagram, which are called \emph{stabilisations} and \emph{destabilisations}.
\end{definition}

\subsection{Kuperberg's theorem}
In early 2000s, it still remains a question whether we get the same virtual knot after the (de)stabilisations or not. In 2003, this problem was solved by Kuperberg's annulus theorem:
\begin{theorem}[\cite{kuperberg2003virtual}, Theorem~1]
	Every stable equivalence class of links in thickened surfaces has
	a unique irreducible representative.
\end{theorem}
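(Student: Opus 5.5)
The plan is to recast stable equivalence as a rewriting system on pairs $(S\times I, L)$ and apply a diamond-lemma argument. A \emph{destabilisation} of $(S\times I,L)$ is determined by a \emph{vertical} annulus $A=\gamma\times I$ disjoint from $L$, where $\gamma$ is an essential simple closed curve in $S$. Cutting along $A$ and capping the two new boundary circles of $S$ with discs (and correspondingly $A$'s two copies with $D^2\times I$) removes a handle. We also discard components of $S$ that miss the diagram. Call $(S,L)$ \emph{irreducible} if no destabilisation applies. Following \cite{kuperberg2003virtual}, we use that a stabilisation is the inverse of a destabilisation, so stable equivalence is the equivalence relation generated by destabilisations.

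\emph{Existence.} Each destabilisation strictly lowers the complexity $-\chi(S)$, summed over components meeting the diagram, or else lowers the number of components of $S$. Hence any sequence of destabilisations terminates, and every class contains an irreducible representative.

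\emph{Uniqueness.} By Newman's diamond lemma, termination reduces uniqueness to local confluence. Suppose $(S,L)$ destabilises along two vertical annuli $A$ and $A'$. We must show that the two results have a common further destabilisation, up to homeomorphism of pairs. If $A\cap A'=\emptyset$, the two moves commute and we are done. Otherwise, put $A$ and $A'$ in general position. Since both are vertical, we may first isotope $\gamma$ and $\gamma'$ in $S\setminus\pi(L)$ to minimal position. Then $A\cap A'$ is a union of vertical arcs $\{p\}\times I$ for $p\in\gamma\cap\gamma'$. Every such intersection point lies in a single complementary region $R$ of the diagram, since $\gamma$ and $\gamma'$ both avoid $\pi(L)$.

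The key observation is the following. If $\gamma$ and $\gamma'$ intersect essentially, then the regular neighbourhood $N(\gamma\cup\gamma')$ is a subsurface of $S$ disjoint from $\pi(L)$. Its complementary pieces carry the diagram. Destabilising along $A$ turns $\gamma'$ into arcs whose endpoints lie on the capping discs. Joining these arcs by boundary-parallel bands produces a curve $\gamma''$ disjoint from $\gamma$ and from $L$, and after the first move $\gamma''$ is essential whenever $N(\gamma\cup\gamma')$ has positive genus or several boundary components meeting $\pi(L)$. Symmetrically, destabilising along $A'$ first gives a corresponding curve for $A$. I would then show that both orders of moves can be completed to the same surface. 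That surface is obtained from $S$ by deleting $N(\gamma\cup\gamma')$ entirely, capping its boundary with discs, and discarding diagram-free components. This yields the common descendant.

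The main obstacle is exactly this intersecting case. One must check that the surgered curve remains essential, or else that the step is already vacuous. One must also check that "capping off the whole neighbourhood" is reachable by destabilisations in both orders. This requires a careful case analysis of the boundary components of $N(\gamma\cup\gamma')$: some may bound discs containing no diagram, and these get filled, while others separate off diagram-free subsurfaces, which get discarded. If this combinatorial route becomes unwieldy, my fallback is the 3-manifold formulation, as in Kuperberg. Vertical essential annuli in $M=S\times I\setminus N(L)$ are characteristic in the sense of JSJ theory. After removing essential spheres, the maximal family of disjoint non-parallel vertical annuli is unique up to isotopy. Cutting along all of them yields a canonical irreducible representative, which gives uniqueness directly.
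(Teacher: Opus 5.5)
The paper does not prove this statement; it only quotes Kuperberg. Your outline therefore has to stand on its own, and it has two genuine gaps.

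\textbf{First gap: the definition of destabilisation.} As written, it makes the step ``if $A\cap A'=\emptyset$ the two moves commute'' false. A core that is essential in $S$ need not stay essential after another destabilisation. Take $S$ of genus $2$, a separating curve $\sigma$ between the two handles, and meridians $\mu_1,\mu_2$ of the handles. Let $L=K_1\sqcup K_2$, with $K_i$ a parallel copy of $\mu_i$ in $S\times\{0\}$; this link is not even split.
\begin{itemize}
\item Cutting along $\sigma$ and then along $\mu_1,\mu_2$ gives two copies of $S^2\times I$, each containing an unknot.
\item Cutting along $\mu_1$ and then $\mu_2$ gives a single $S^2\times I$ containing a two-component unlink.
\end{itemize}
Both results are irreducible in your sense, since $S^2$ has no essential curves, and they are not homeomorphic. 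So with your definitions the uniqueness statement is false.

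The diamond already fails at the disjoint pair $\sigma,\mu_1$. After cutting along $\mu_1$, the curve $\sigma$ bounds a disc containing $\pi(K_1)$, so the move along it is lost.

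The fix is to allow cutting along any vertical annulus disjoint from $L$ whose core does not bound a diagram-free disc, still discarding diagram-free components. Your complexity $-\chi(S)$ still terminates: each such cut lowers it by at least $2$, and it is bounded below by $-2$ times the number of components of $L$. With this convention disjoint moves really do commute.

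Your ``main obstacle'' also disappears. Cut both $S_\gamma$ and $S_{\gamma'}$ along all boundary curves of $N(\gamma\cup\gamma')$ (a cut along a curve bounding a diagram-free disc changes nothing), and discard the diagram-free piece. In both cases this gives $S\setminus N(\gamma\cup\gamma')$ with its boundary capped off. No surgered curve $\gamma''$ and no essentiality check are needed.

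\textbf{Second gap: simultaneous verticality.} This is where the real content of the theorem lies. You only compare annuli $\gamma\times I$ and $\gamma'\times I$ that are vertical for one fixed product structure and miss one fixed diagram $\pi(L)$. Your objects, however, are pairs up to homeomorphism, i.e.\ $L$ up to isotopy, and a destabilisation of such a class may only become visible after isotoping $L$.

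Pulled back to a fixed $L$, two destabilising annuli are just annuli in $S\times I\setminus L$ joining $S\times\{-1\}$ to $S\times\{1\}$. Each is vertical only up to an isotopy of $S\times I$ that may pass through $L$. A priori no single product structure (equivalently, no single diagram of $L$) makes both vertical, and Newman's lemma on classes needs local confluence for exactly such pairs.

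The missing ingredient is a three-dimensional normalisation:
\begin{itemize}
\item Isotope $A'$ in the complement of $L$ to minimise $|A\cap A'|$.
\item Remove inessential circles and arcs of intersection by innermost-disc and outermost-arc arguments. This uses irreducibility of the complement, so splitting must be handled first.
\item Treat intersection circles parallel to the cores.
\item Show that once only spanning arcs remain, $N(A\cup A')$ is a vertical $I$-bundle disjoint from $L$.
\end{itemize}
Only after this does your planar picture apply.

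Your JSJ fallback is also wrong as stated. Maximal families of disjoint, non-parallel vertical annuli are not unique up to isotopy: a diagram-free handle contributes either its meridian annulus or its longitude annulus. Only the characteristic family is canonical, and reading off the irreducible representative from it would itself require proof.
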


This theorem says that we will always arrive at the same \emph{minimal genus projection surface} if we follow Kuperberg's construction. From this point, we want to restrict our view on the minimal genus surface, since by Kuperberg's theorem, it is always the correct surface for us to look at. We will view the link as embedded in the thickened surface $S\times I$ with $S$ being the ``correct" surface. Moreover, we extend this point of view by looking at knots and links in $S\times I$ or $S\widetilde{\times}I$, depending on the orientability of $S$, with cellular diagrams on $S$.

\subsection{Organization of the paper}
In Section~\ref{sec:Def}, we carefully define the octahedral decomposition of a virtual link complement. In Section~\ref{sec:cube}, we transform the octahedral decomposition to a cubical one through a coning construction, and we prove the main theorem for orientable case $S\times I$. In Section~\ref{sec:tet}, we show that by cutting the octahedra further into tetrahedra, we can relate the octahedral decomposition to the stellated triangulation used by Futer and Kaplan-Kelly~\cite{FKP:quasifuchsian}. Later in the section, we carefully write down the octahedral decomposition for links in $S\widetilde{\times}I$ and show the same nonpositive curvature result for links in $S\widetilde{\times} I$. Finally, in Section~\ref{sec:open}, we elaborate some open questions relating to the octahedral decomposition of virtual link complements.

\section{Octahedral decomposition of virtual link complement}\label{sec:Def}

Let $L$ be a link in $S\times I$ with a weakly prime, connected, reduced alternating diagram on $S\times \{0\} $ in the sense of Howie-Purcell~\cite{HowiePurcell}, where $I=[-1,1]$. In Sakuma-Yokota~\cite{sakuma2018application}, they proved that the cubing obtained from the octahedral decomposition of a prime alternating link in $S^3$ with a reduced alternating diagram is nonpositively-curved and the edges of the decomposition are essential. We generalise their case to $S\times I\setminus L$.

We start by setting up the following conventions and notations. Let $S=S_g$, then $\chi(S)=2-2g$. Let $\pi: S\times I\to S$ be the projection and $\pi(L)$ be the diagram on $S$, which is a 4-valent graph on $S\times \{0\}$. Let $\pi(L)^*$ be the dual graph embedded in $S\times \{0\}$, such that $\pi(L)\cap\pi(L)^*=P_1, P_2, \ldots, P_{2c}$. We call these the middle points of $L$. The vertices of $\pi(L)$ are denoted $X_1, X_2, \ldots, X_{c}$, as crossings of the link diagram. The vertices of $\dualD$ are denoted $R_1, R_2, \ldots, R_{c+\chi}$, corresponding to the regions of the link diagram. Observe that the number of regions and the number of vertices are as claimed by an Euler characteristic argument, where $\chi=\chi (S)=2-2g$. See Figure~\ref{Fig:dual} for our setting.

\begin{figure}
	\centering
	\begin{overpic}{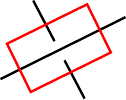}
		\put(45,50){$X_i$}
		\put(90,40){$R_j$}
	\end{overpic}
	\caption{Notation on the diagram and its dual}
	\label{Fig:dual}
\end{figure}

The strands of $L$ connecting one middle point to another are called overstrands and understrands, depending on which half of $S\times [-1,1]$ they lie in.

Sakuma and Sakai~\cite{sakai2024two} have written a nice description of this decomposition, and they use it and its interaction with the checkerboard complex to rewrite a proof of a Kleinian group-theoretic theorem for classical hyperbolic 2-bridge links~\cite{agol2002classification}. In Sakuma-Sakai's proof, they utilized the $CAT(0)$-space tiled by cubes, instead of hyperbolic 3-space tiled by hyperbolic polyhedra.

We can look at the link exterior and the link complement separately. The link exterior $S\times I \setminus N(L)$ can be obtained from decomposition of the link complement $S\times I\setminus L$ by truncating certain vertices corresponding to the link $L$. Or the link complement can be obtained by attaching a series of collar neighborhood of the boundary of the link exterior coming from the link, see Sakai-Sakuma~\cite{sakai2024two} for the detailed construction.

The following construction is a generalised version of the \emph{Aitchison complex}, which we loosely refer to as the octahedral decomposition used by Aitchison and Rubinstein~\cite{aitchison1992cusp} in the 90s:

\begin{construction}[generalized octahedral decomposition]\label{const: octa_decomp}
	The link complement can be obtained as follows.
	\begin{enumerate}
		\item The link exterior in the thickened surface $S\times I \setminus N(L)$ is decomposed into $c$ truncated octahedra, one for each crossing. Here we place the octahedra with two opposite vertices on the link one at the over-crossing and one at the under. These are truncated because we are taking the link exterior. They are the top and bottom vertices in Figure~\ref{Fig:octdecomp}. The remaining four vertices are also truncated. Initially, take them to lie on the surface $S\times \{0\}$. We call them \emph{middle vertices}. Note that in the figure, the middle level truncation is not shown, and the reason will be shown in later context.
		
		\begin{figure}
			\centering
			\includegraphics[width=0.2\linewidth]{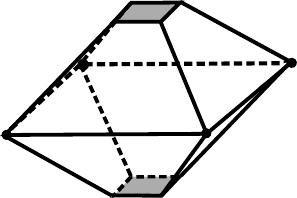}
			\caption{What each octahedron looks like in the decomposition for $S\times I$. The gray part tiles the boundary of the regular neighborhood of the link in $M$. Neighborhoods of the 4 horizontal vertices lie on the coned surfaces $S\times \{ \pm 1 \}$}
			\label{Fig:octdecomp}
		\end{figure}
		
		\item We drag two opposite vertices of the 4 middle vertices of each octahedron vertically down to $S\times \{-1\}$. The other two we pull up to $S\times \{1\}$. See Figure~\ref{Fig:decomp}
		
		\begin{figure}
			\centering
			\begin{overpic}[width=0.3\linewidth]{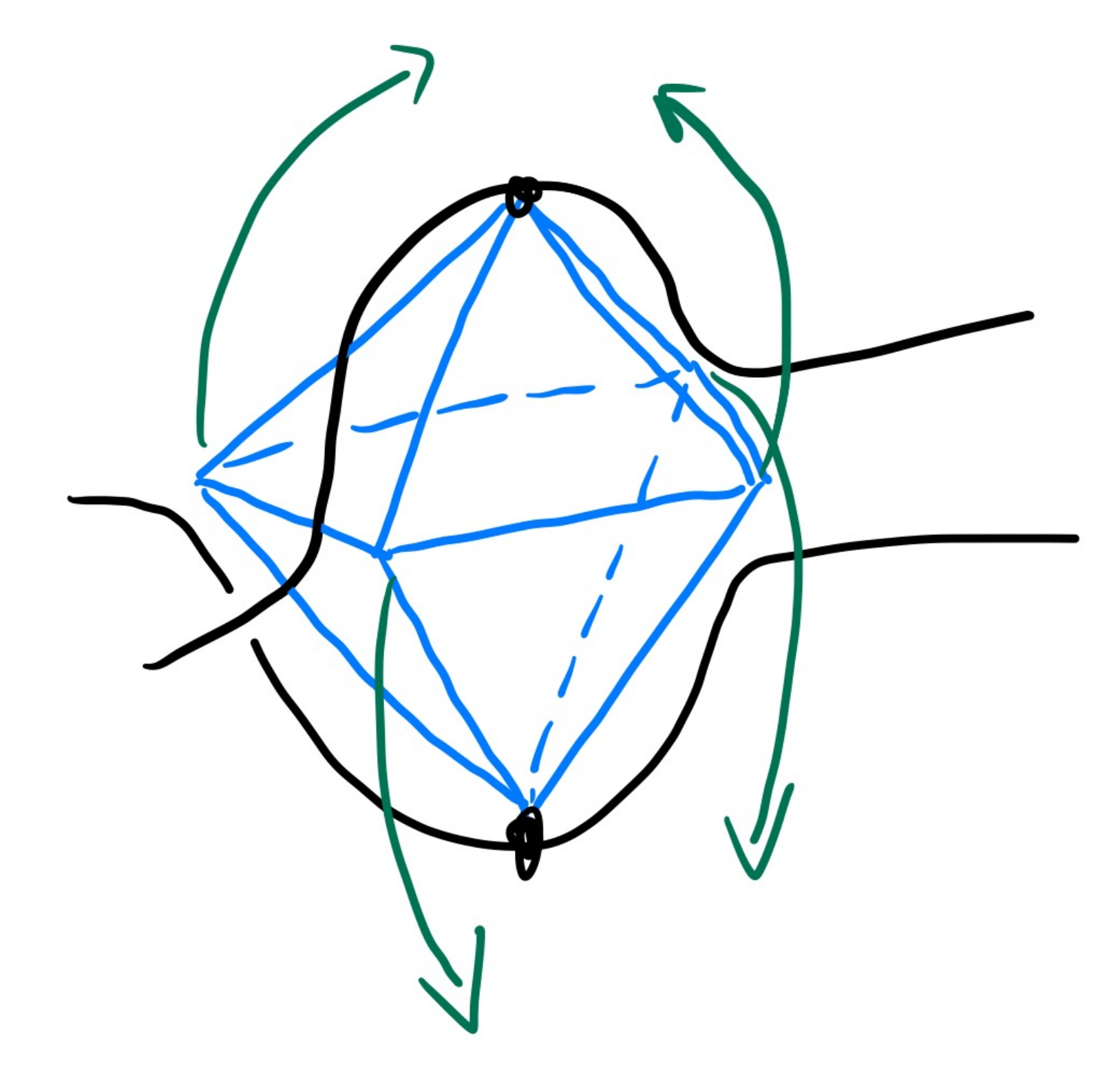}
				\put(40,88){$S\times \{+1\}$}
				\put(45,5){$S\times \{-1\}$}
			\end{overpic}
			\caption{Decomposition of link complement into octahedra according to the diagram.}
			\label{Fig:decomp}
		\end{figure}
		
		\item Each triangular face of an octahedron is glued to a unique face of another octahedron from a neighboring crossing. See Figure~\ref{fig:gluing} for the specific gluing, where the faces shaded red are glued together through the face $L v_+ v_-$ in the middle.
		
		\begin{figure}
			\centering
			\begin{overpic}[width=0.7\linewidth]{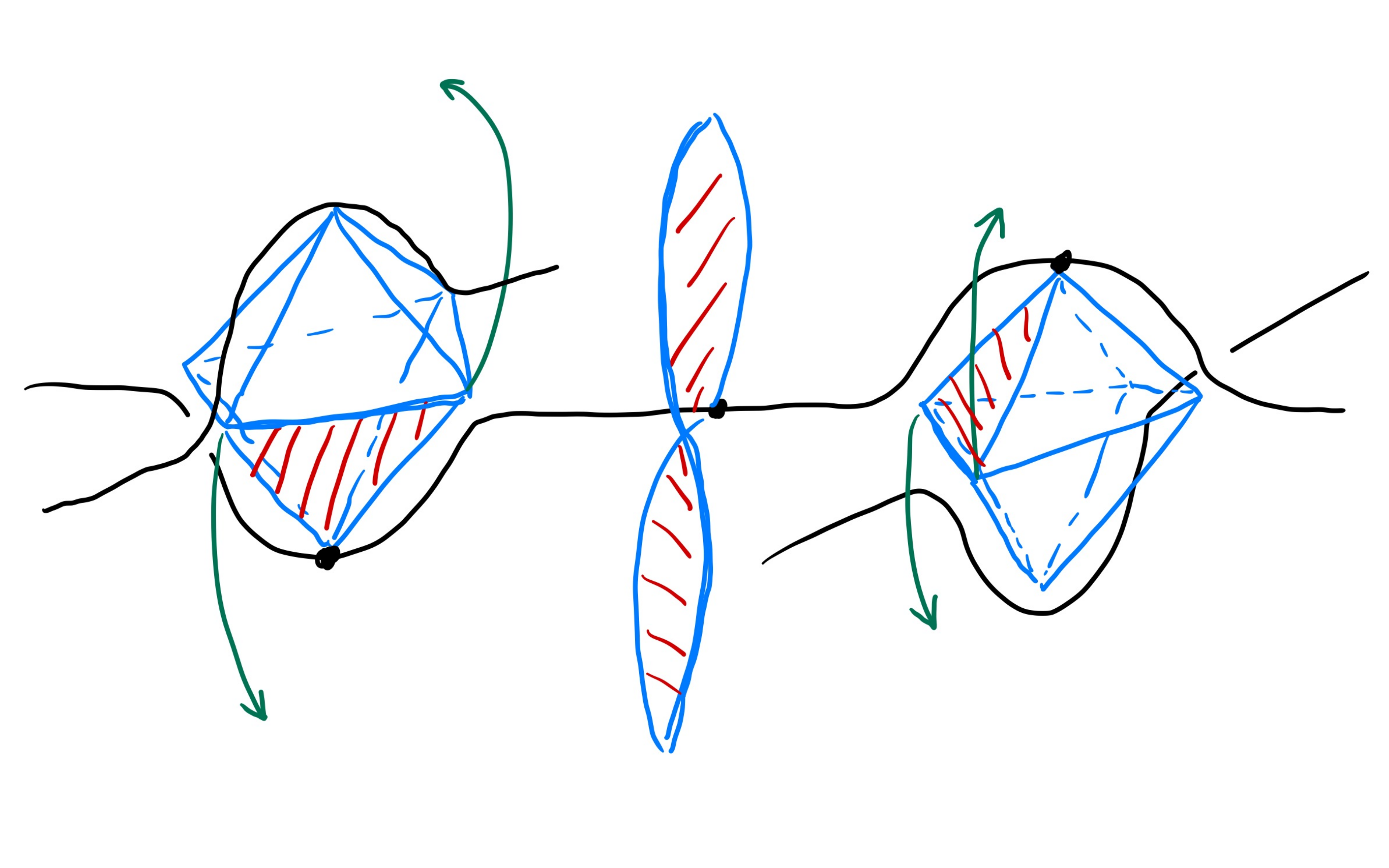}
				\put(50,57){$v_+$}
				\put(50,5){$v_-$}
                \put(25,34){\textcolor{blue}{$R$}}
                \put(66,28){\textcolor{blue}{$R$}}
                \put(43,38){\textcolor{blue}{$R$}}
                \put(24,20){$L$}
                \put(53,34){$L$}
                \put(78,46){$L$}
			\end{overpic}
			\caption{Gluing neighboring octahedra together along a long triangular face}
			\label{fig:gluing}
		\end{figure}

		\item All faces can be glued this way, and after gluing the $c$ octahedra, the entire atmosphere of the link is covered. In particular, before pulling vertices to $S\times \{\pm 1\}$, each $n-$sided region is covered by $n\times 1/4$ of the middle squares of octahedra. Pulling the vertices to $S\times \{\pm 1\}$ gives an edge running through the region from $S\times \{-1\}$ to $S\times \{1\}$, and the $n$ octahedra all glue to this edge.
		
		\item The checkerboard surfaces are ``hyperplanes" inside each octahedron. See Figure~\ref{Fig:oct}, where blue and green are parts of separate checkerboard surfaces, and they intersect at the red crossing arc of the octahedron. Later in this section we'll see that they do form a hyperplane in the cube complex coming from the octahedral decomposition.
		\begin{figure}
			\centering
			\includegraphics{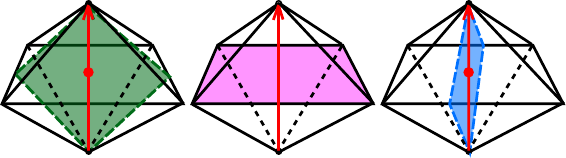}
			\caption{Three codimension-1 objects are inside a single octahedron, with the crossing arc in the middle. The green and blue denote part of the checkerboard surfaces, and the purple plane in the middle denotes the generalised Dehn complex.}
			\label{Fig:oct}
		\end{figure}

		\item The \emph{coned Dehn complex} is made of the ``middle" square of each octahedron, which is colored purple in Figure~\ref{Fig:oct}. Observe that the Dehn complex is dual to the crossing arc.
		
		\item The link of vertices in the octahedral decomposition is divided into two types, one coming from the ideal vertices corresponding to the link $L$, and the other coming from the surfaces $S\times \{\pm 1\}$. The links are shown in Figure~\ref{Fig:link_oct}.
		
		\begin{figure}
			\centering
			\includegraphics[width=0.4\linewidth]{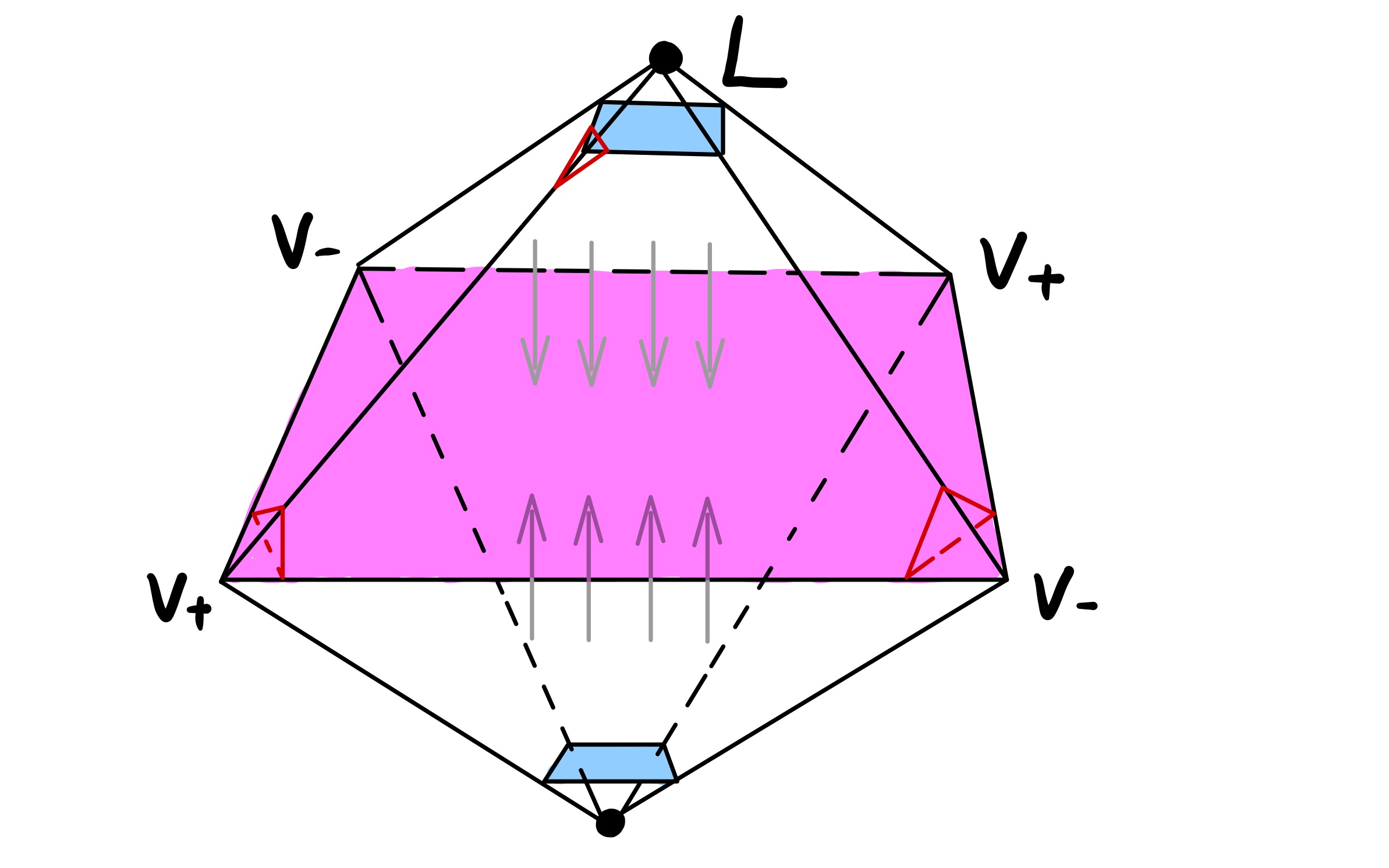}
			\caption{Links of various types of vertices in the octahedron are shown in red. The boundary of the link is shown in blue. The gray arrows denote the retraction of the cube complex to generalised Dehn complex.}
			\label{Fig:link_oct}
		\end{figure}
		
	\end{enumerate}
	
\end{construction}

Note that when we pull the middle vertices to $S\times \{\pm 1\}$, we also identify corresponding edges. We label the vertices with capital letters. We identify the edges to recover the link exterior. See Figure~\ref{fig:lift}, where we first identify the upper edges $AC$ and $AE$ together, and the lower edges $FB$ and $FD$.

\begin{figure}
	\centering
	\includegraphics{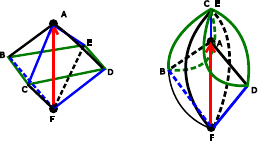}
	\caption{Lift the middle four vertices to the level of boundary surfaces}
	\label{fig:lift}
\end{figure}

After gluing, we get back the link exterior $S\times I \setminus N(L)$. The truncated middle vertices tile the boundary surface.

Note that in other literature like Weeks~\cite{weeks2005computation} or Sakuma-Yokota~\cite{sakuma2018application}, a backward process is written carefully. They start from four ``prisms" around each crossing and deform the prisms so that each prism becomes a tetrahedron and every four tetrahedra around a crossing glue to become an octahedron. See Figure~\ref{fig:deform} for the generalised version of this deformation, where each ideal tetrahedron in the classical setting becomes a partially truncated ideal tetrahedron with two of the vertices truncated to tile the top and bottom surface in the virtual setting.

\begin{figure}
	\centering
	\includegraphics{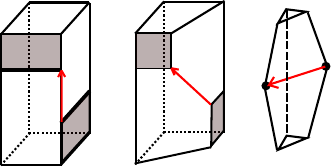}
	\caption{Sakuma-Yokota's construction from spanned prisms}
	\label{fig:deform}
\end{figure}

\section{From octahedra to cubes}\label{sec:cube}

Cube complexes are nice locally-Euclidean, combinatorial objects that are well studied in geometric group theory, for example in Haglund and Wise's 2008 work~\cite{haglund2008special}. They were used to prove virtual Haken conjecture by Agol~\cite{MR3104553}. In 2012, Harlander~\cite{harlander2012hyperbolic} showed that the two dimensional skeleton of the Dehn space associated to a prime, alternating virtual knot is a 2-d nonpositively curved cube complex, and from there, they prove that the group associated to the 2-skeleton is Gromov-hyperbolic.

We think of \emph{cubes} to be Cartesian products $I^n=I\times \ldots \times I$ of intervals $I=[-1,1]$ before we put any geometric structure on them. Our definitions for cube complexes and related items mainly follow textbooks of Wise~\cite{MR2986461} and Bridson-Haefliger~\cite{bridson2013metric}.
\begin{definition}[cube complex]
	A \emph{cube complex} is a cell complex where all the cells are cubes, meaning Cartesian product of intervals $I^n$.
\end{definition}
\begin{definition}[link of a vertex]
	The \emph{link} of a vertex $v$ in a cube complex $\mathcal{C}$ is the simplicial complex where $n$-simplices are corners of $(n+1)$-cubes adjacent with $v$, usually denoted $lk(v)$ or $Link(v)$. 
\end{definition}

The following few definitions also follow from Wise's textbook~\cite{MR2986461}.
\begin{definition}[midcube]\label{Def:midcube}
	A \emph{midcube} in a cube $I^n$ is a subspace of $I^n$ obtained by restricting one of the intervals $I=[-1,1]$ to 0.
\end{definition}

\begin{definition}[hyperplane]\label{Def:hyperplane}
	A \emph{hyperplane} in a cube complex $\mathcal{C}$ is the maximal span of midcubes inside $\mathcal{C}$.
\end{definition}

\begin{example}
	The space obtained by gluing three squares together at a corner is a cube complex, see Figure~\ref{Fig:cubeex} for this space and its hyperplane and vertex link. We'll see from later definitions
\end{example}

\begin{figure}
	\centering
	\includegraphics[width=0.2\linewidth]{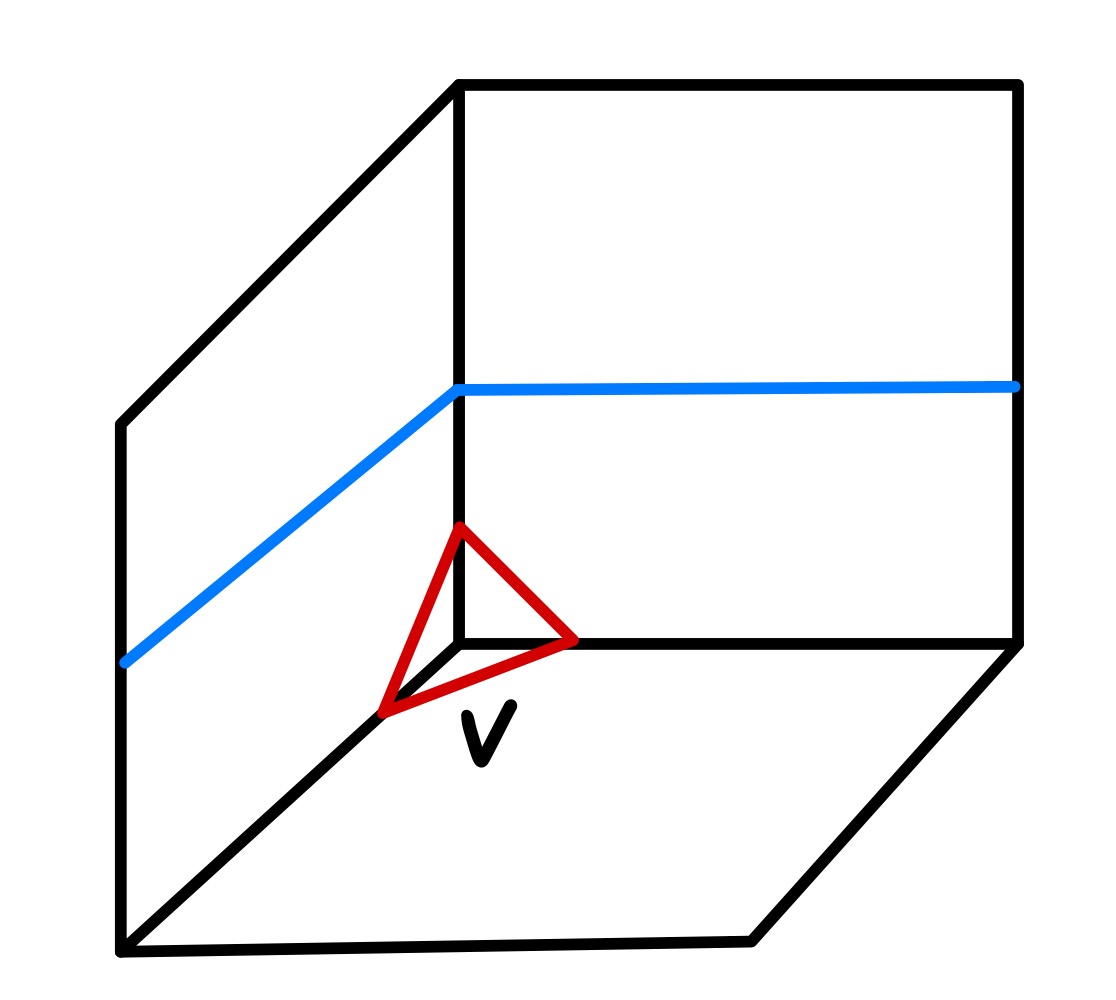}
	\caption{An example of a cube complex is shown in black made up of three squares. The blue line is one hyperplane and the red triangle is the link of the vertex $v$, which shows that the cube complex is not nonpositively curved.}
	\label{Fig:cubeex}
\end{figure}

\begin{example}[checkerboard hyperplane]
	The checkerboard surfaces in the classical octahedral decomposition or the Aitchison complex are both hyperplanes in the cubical decomposition $\mathcal{C}$. See Sakai-Sakuma~\cite{sakai2024two} for more information about classical checkerboard hyperplanes.
\end{example}

\begin{definition}[flag complex]\label{Def:flag}
	A \emph{flag complex} is a simplicial complex such that $n+1$ vertices span an $n$-simplex if and only if they are pairwise adjacent.
\end{definition}
Note that a flag complex is completely determined by a simplicial graph. A graph $\Gamma$ is a flag if and only if $girth(\Gamma)\geq 4$.

The following combinatorial theorem is a useful criterion to judge if a given cube complex has nonpositive curvature without checking the actual angles around each vertex:
\begin{proposition}[Gromov's link condition]\cite[5.20]{bridson2013metric}\label{prop:flag}
	A cube complex is nonpositively curved if for any vertex $x$, the link $Link(x)$ is a simplicial complex which is a flag.
\end{proposition}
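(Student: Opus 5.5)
The statement is the classical combinatorial criterion of Gromov, and I would prove it along the lines of Bridson--Haefliger. First I would reduce it to a statement about spherical complexes. Give $\mathcal{C}$ the piecewise Euclidean metric in which every cube is a unit Euclidean cube, and assume $\mathcal{C}$ is finite dimensional; the octahedral cubings are. Then $\mathcal{C}$ has finitely many shapes, so it is a complete geodesic space. Nonpositive curvature means local $CAT(0)$. For a point $x$ of $\mathcal{C}$, a small ball about $x$ is isometric to a ball in the Euclidean cone over the geometric link $Lk(x)$. I will use the cone criterion (Berestovskii): the Euclidean cone $C_0 X$ is $CAT(0)$ if and only if $X$ is $CAT(1)$. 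So it suffices to show that every geometric link is $CAT(1)$. If $x$ lies in the interior of a $k$-cube $Q$ and $v$ is a vertex of $Q$, then $Lk(x)$ is the spherical join of $S^{k-1}$ with the link of the corresponding $(k-1)$-simplex of $Lk(v)$. Spherical joins of $CAT(1)$ spaces are $CAT(1)$. Links of simplices in a flag complex are again flag. So everything reduces to one claim: an \emph{all-right} spherical simplicial complex $K$ (every simplex a spherical simplex with all edge lengths $\pi/2$) is $CAT(1)$ whenever it is flag. Here $K = Lk(v)$, which carries this metric because cube corners are right-angled.

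Next I would prove the claim by induction on $\dim K$. In dimension $0$ the claim is trivial. For dimension $1$, $K$ is a graph with edges of length $\pi/2$, and flag means girth at least $4$, i.e.\ every embedded circuit has length at least $2\pi$, which is exactly the $CAT(1)$ condition for a metric graph. For the inductive step, the links of vertices of $K$ are lower-dimensional flag all-right complexes, hence $CAT(1)$ by induction. Consequently $K$ is locally $CAT(1)$. By the standard characterisation, a compact locally $CAT(1)$ space is $CAT(1)$ if and only if it contains no closed local geodesic of length $<2\pi$. Finitely many shapes makes the compactness issue harmless.

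The main obstacle is ruling out short closed geodesics, and here flagness is used essentially. For a vertex $u$ of $K$, the open star $st(u)$ is the open ball of radius $\pi/2$ about $u$. It is isometric to an open hemisphere-type cone over $Lk(u)$, which is $CAT(1)$ by induction. First I would show that a local geodesic $\gamma$ entering $st(u)$ without passing through $u$ travels inside it along a path of length exactly $\pi$, as a great semicircle does in a hemisphere. Second, given a closed local geodesic $\gamma$ of length $<2\pi$, I would pick the successive vertices $u_1,\dots,u_m$ whose open stars $\gamma$ crosses. The length bound forces consecutive stars to overlap so heavily that the $u_i$ are pairwise adjacent. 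By the flag condition they then span a simplex $\sigma$, so $\gamma$ lies in the open star of $\sigma$. That star is a convex $CAT(1)$ region of diameter less than $\pi$, and so it contains no closed geodesic. This contradiction completes the induction. The delicate bookkeeping, which I expect to take the most care, is the first part: estimating the length spent in each star and showing the stars overlap pairwise.
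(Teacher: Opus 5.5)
Your route is the Bridson--Haefliger proof (Theorem~II.5.18 feeding into II.5.20). The paper gives nothing beyond that citation, so this is the intended argument. The reduction to all-right flag complexes, the induction on dimension, and the ``great semicircle'' lemma are the right ingredients. The last step, however, does not work as written.

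\emph{Where it fails.} The points of $\gamma$ need not lie in the open star of $\sigma$. A point lies in $st(\sigma)$ only if its support simplex \emph{contains} $\sigma$. Your argument only shows that the support simplex of each $\gamma(t)$ has its vertices \emph{among} the $u_i$. Open stars also need not have diameter less than $\pi$. Already $st(u)$ is the open $\pi/2$-ball about $u$; in a $4$-cycle it is an open arc of length $\pi$ and diameter $\pi$. Finally, you have no convexity for such a set before you know that $K$ is $CAT(1)$.

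\emph{The repair.} Use the containment you actually have, which is stronger. A point $p$ lies in $st(w)$ exactly when $w$ is a vertex of its support simplex $\tau(p)$. Hence every vertex of $\tau(\gamma(t))$ is one of the $u_i$. Since $K$ is simplicial, $\tau(\gamma(t))$ is then a face of $\sigma$, so $\gamma\subset\sigma$. The spherical metric of $\sigma$ dominates the restriction of $d_K$, so a local geodesic of $K$ lying in $\sigma$ is a local geodesic of $\sigma$. It is therefore an arc of a great circle in the all-right simplex $\sigma\subset S^{k}$. That simplex lies in an open hemisphere, so the arc cannot close up, which gives the contradiction.

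\emph{Two smaller points.} First, pairwise adjacency needs no ``consecutive'' bookkeeping. Each $\gamma^{-1}(st(u_i))$ is a single open arc of length $\pi$, since two such arcs would already have total length $2\pi>\ell$. Any two open arcs of length $\pi$ in a circle of length $\ell<2\pi$ intersect. A common point has a support simplex containing both $u_i$ and $u_j$, so they are adjacent. Second, your semicircle lemma has two unhandled cases. It must cover $\gamma$ passing through $u$, where the arc is two radial segments of length $\pi/2$. It must also exclude $\gamma\subset st(u)$. There, either $\gamma$ is radial, which is impossible for a closed curve, or its projection to $Lk(u)$ is a closed local geodesic of length less than $2\pi$ in a $CAT(1)$ space.
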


We can quotient the surface boundaries $S\times \{\pm 1\}$ to vertices $P_+$, $P_-$ and take the link exterior in the coned manifold $M^*$(truncating the neighborhood of the link in the octahedra). The resulting space $M^*\setminus N(L)$ admits a decomposition into partially truncated octahedra, with truncated vertices on the top and bottom of each octahedron lying on the neighborhood of the link. The truncated vertices that were on surface boundaries $S\times \{\pm 1\}$ before taking the quotient to $M^*$ now become material vertices after the quotient. See Figure~\ref{Fig:octdecomp} again; this is why we did not draw the middle 4 vertices as truncated in the decomposition. 

See Figure~\ref{Fig:cubeoct} for one cube inside an octahedron in the octahedral decomposition of the link exterior after taking the quotient. The exterior $M^* \setminus N(L)$ now divides into cubes.

\begin{figure}
	\centering
	\includegraphics[width=0.2\linewidth]{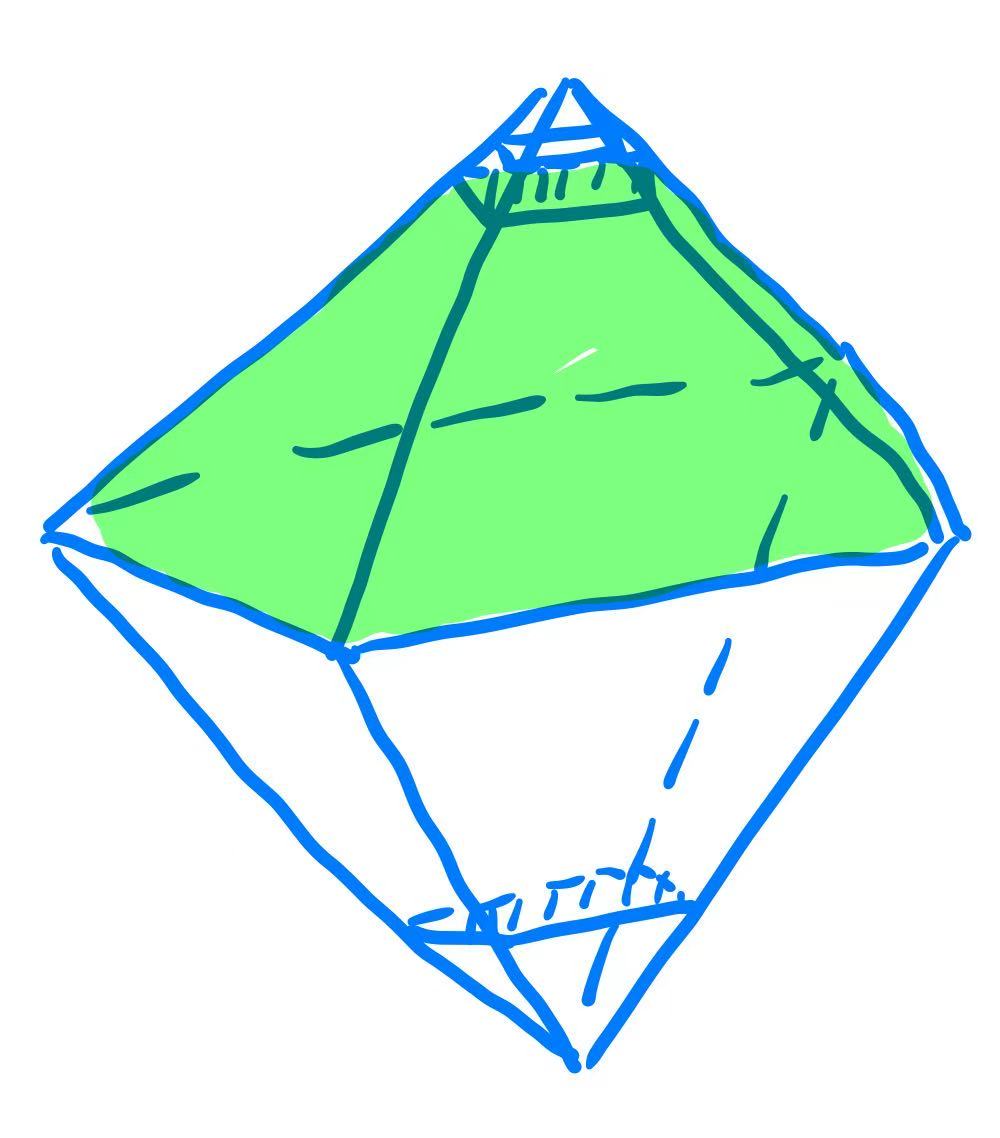}
	\caption{The green part shows a cube inside an octahedron}
	\label{Fig:cubeoct}
\end{figure}

If we further cut the octahedra along the projection surface level, we obtain a cubical decomposition $\mathcal{C}$ of $M^*\setminus N(L)$. The decomposition $\mathcal{C}$ is a cube complex made of only 3-dimensional cubes.

Note that if we retract the cube complex to the middle squares of the octahedra, we get the Dehn complex in the classical case, and this can be generalised naturally, see Sakuma-Yokota~\cite{sakuma2018application} for the classical construction, where they realize that a retraction of their cube complex is isomorphic to the Dehn complex.

\subsubsection{Classical Dehn complex}
The Dehn complex is a 2-complex that is constructed from a link diagram and is a deformation retraction of the link complement. It is related to the Wirtinger complex used to determine the Wirtinger presentation of a classical knot group, see for example, Rolfsen~\cite{rolfsen2003knots}, where there is one single vertex as the base point for calculating the fundamental group, one loop for each over-strand of the diagram as the generators, and one 4-letter relator for each crossing as the relators.

Following Bridson-Haefliger~\cite{bridson2013metric}, the construction of a Dehn complex from a classical knot diagram $\Pi$ that is regular 4-valent goes as follows:

\begin{construction}[classical Dehn complex]\label{const:Dehn}
	Let $\Pi$ be a link diagram on $S^2$ that is checkerboard colored, then the \emph{Dehn complex} $X$ associated to $\Pi$ contains the following:
	\begin{enumerate}
		\item The 2-complex $X$ has exactly two vertices $v_t$ and $v_b$, namely top and bottom vertices.
		\item The 1-skeleton of $X$ contains edges running between top and bottom vertices, corresponding to the regions of the link diagram $\Pi$, one for each region. The edge in the black region is oriented from $v_t$ to $v_b$, and the edge in the white region from $v_b$ to $v_t$. See Figure~\ref{Fig:dehn} for the 1-skeleton. 
		\item For each crossing in $\Pi$, we have a 2-cell that is a square, attached along the edges traveling up and down the regions around each crossing. See Figure~\ref{Fig:relatorsdehn} for the 2-cells corresponding to the figure-8 knot diagram in Figure~\ref{Fig:dehn}.
	\end{enumerate}
\end{construction}

\begin{figure}
	\centering
	\includegraphics[width=0.4\linewidth]{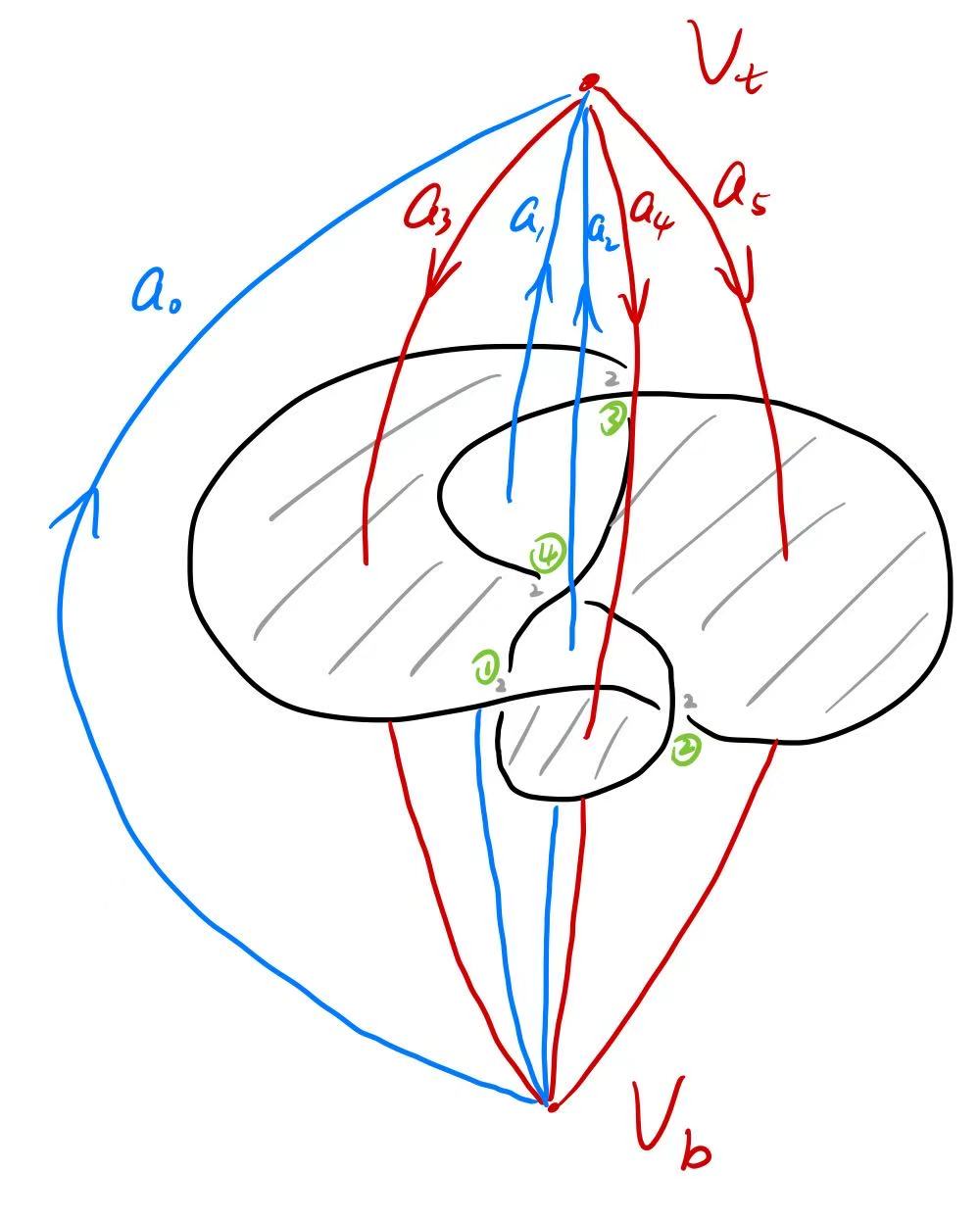}
	\caption{The classical construction of a Dehn complex}
	\label{Fig:dehn}
\end{figure}

\begin{figure}
	\centering
	\includegraphics[width=0.4\linewidth]{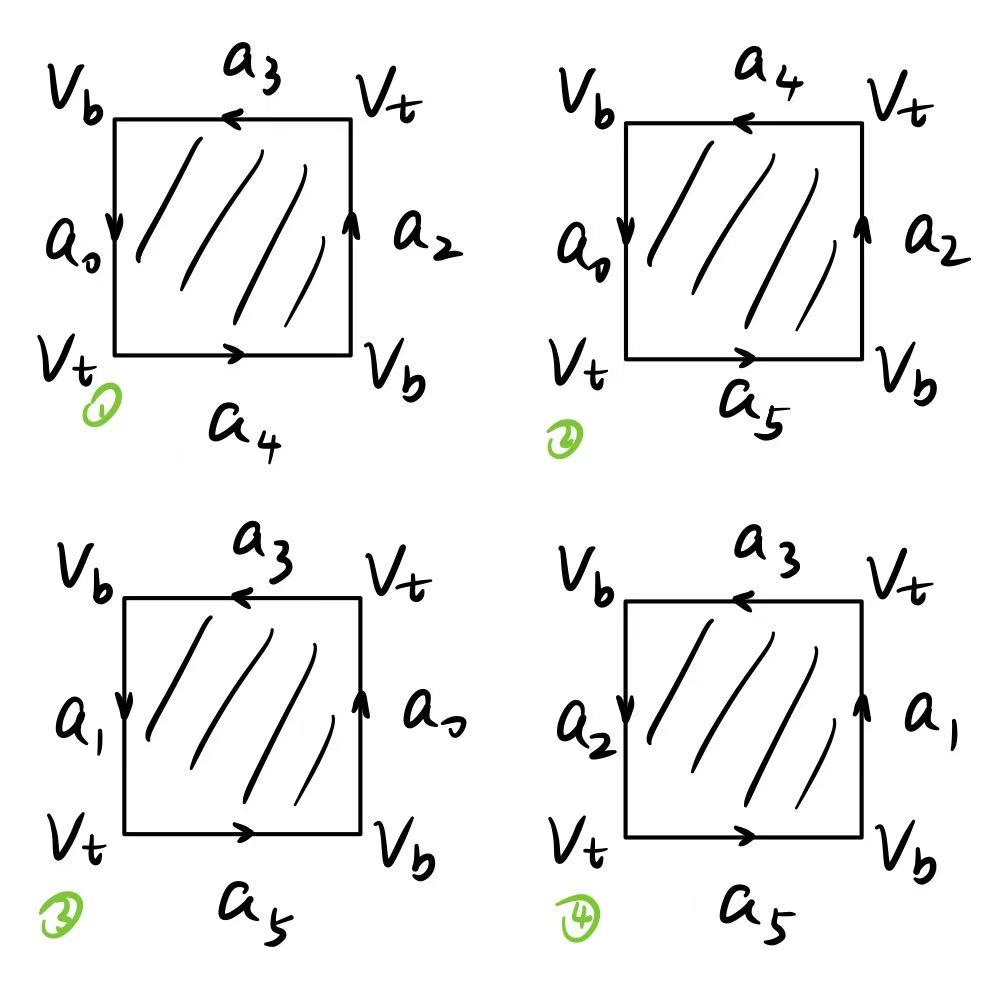}
	\caption{The relators of a Dehn complex}
	\label{Fig:relatorsdehn}
\end{figure}

Using the classical Dehn complex, Wise was able to extract subgroup separability of the figure-8 knot group. Moreover, using the combinatorial property, he was able to obtain the following theorem about alternating diagrams:
\begin{theorem}(Wise~\cite{wise2006subgroup})
	For a knot or link diagram $\pi(L)$, its associated Dehn complex is nonpositively curved if and only if $\pi(L)$ is prime and alternating.
\end{theorem}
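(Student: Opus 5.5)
The plan is to reduce nonpositive curvature to a purely combinatorial statement about the two vertex links, using Gromov's link condition (Proposition~\ref{prop:flag}). Since the Dehn complex $X$ of Construction~\ref{const:Dehn} is $2$-dimensional, the flag condition at a vertex reduces to the following: the link must be a simplicial graph (no loops, no multiple edges) with no $3$-cycles, i.e.\ of girth at least $4$. For the ``only if'' direction I will use the converse of this criterion, which I take as the standard fact for $2$-complexes: a loop, bigon or triangle in a vertex link produces a cone angle at most $\pi$ around a short cycle, so $X$ is not nonpositively curved. So the whole proof comes down to computing $Link(v_t)$ and $Link(v_b)$ explicitly from $\pi(L)$.

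\textbf{Step 1: compute the links.} Every edge of $X$ runs between $v_t$ and $v_b$. Hence each vertex link has one vertex for each region of $\pi(L)$. Each square, coming from a crossing $X_i$, has four corners, and these alternate between $v_t$ and $v_b$. So each crossing contributes exactly two edges to $Link(v_t)$ and two to $Link(v_b)$, each joining two of the four regions around $X_i$.

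I will read off the boundary word of the square from the orientation convention: black edges go $v_t\to v_b$ and white edges go $v_b\to v_t$. The word should be determined by which strand is over. The expected outcome is that each corner joins a black region to an adjacent white region across one of the four edge-germs of $\pi(L)$ at $X_i$. The two germs belonging to $v_t$ form one opposite pair, selected by the over/under information, and the other pair belongs to $v_b$.

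\textbf{Step 2: use alternation.} If $\pi(L)$ is alternating, I claim every edge of $\pi(L)$ between two crossings receives the same label ($t$ or $b$) from both of its endpoints. Then $Link(v_t)$ and $Link(v_b)$ together form exactly the dual graph $\pi(L)^*$, each dual edge used once. In particular each link is a subgraph of the checkerboard-bipartite graph $\pi(L)^*$.

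Bipartiteness immediately rules out loops and triangles, so only double edges remain to exclude. A double edge means two regions sharing two edges of the diagram, or two corners of one square joining the same pair. I will argue, by drawing a simple closed curve through the two shared regions across those two edges, that this happens exactly when the diagram is non-prime, or has a nugatory crossing (which is also excluded by primeness). This proves the ``if'' direction.

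\textbf{Step 3: the converse.} Suppose $\pi(L)$ is not alternating. Then some edge of $\pi(L)$ is labelled $t$ at one end and $b$ at the other. I will show that in this case some square has two of its corners at the same vertex identifying the same pair of region-edges in a way that creates a loop or a bigon in the link; more precisely, two consecutive crossings force a short cycle through a single region. If $\pi(L)$ is alternating but not prime, the separating curve gives a bigon directly, as in Step~2.

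The main obstacle is Step~1 together with the non-alternating case of Step~3: getting the orientation bookkeeping of the boundary word exactly right, so that the labels really are determined by over/under and really do produce a degenerate cycle when alternation fails. I would first check the figure-eight example of Figure~\ref{Fig:relatorsdehn} by hand, and then a single crossing change on it, to calibrate the general claim.
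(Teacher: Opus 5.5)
The paper gives no proof of this statement; it is quoted from Wise. I have therefore judged your outline on its own terms. Your framework is the right one:
\begin{itemize}
\item Gromov's link condition;
\item both vertex links as graphs on the set of regions;
\item one link-edge per corner of a square;
\item corners matched with the four germs of $\pi(L)$ at a crossing, the over-pair going to one vertex and the under-pair to the other.
\end{itemize}

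However, the central claim of Step~2 is backwards, and Step~3 inherits the error. The square at a crossing is the middle square of the octahedron there, so it passes below the overstrand and above the understrand. Its two over-germ corners therefore sit at one vertex (say $v_b$) and its two under-germ corners at the other, uniformly over all crossings.

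An edge $e$ of $\pi(L)$ separates two regions $R\neq R'$ and has two germs. Each germ contributes a link-edge $RR'$ to the link of the vertex it is labelled with. In an alternating diagram $e$ is over at one end and under at the other, so its ends carry \emph{different} labels. Thus $e$ contributes one edge $RR'$ to $Link(v_t)$ and one to $Link(v_b)$, and each link \emph{separately} is a copy of $\pi(L)^*$. This is also how the paper uses it in the proof of Proposition~\ref{prop:npc}, where $lk(P_+)$ is identified with all of $D^*$.

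Your statement that the two links together make up $\pi(L)^*$ with each dual edge used once fails even by counting: there are $4c$ corners but only $2c$ dual edges. Moreover, if both ends of $e$ really carried the same label, that link would contain two distinct edges joining $R$ and $R'$, which is a bigon. Your labelling would destroy the flag condition rather than establish it. Your conclusion ``each link is a subgraph of $\pi(L)^*$'' is true, but it does not follow from the premise you give.

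The same reversal leaves the ``only if'' direction without its mechanism. Non-alternation means some edge $e$ is over at both ends (or under at both). Then both of its germs are labelled $b$ (or $t$), and the two corners give two parallel edges $RR'$ in one vertex link. That is a cycle of length $2<4$, so the complex is not nonpositively curved. Your Step~3 assumes instead that a non-alternating edge has mixed labels. Mixed labels put one copy of $RR'$ in each link, which is harmless, so the promised short cycle has nothing to come from.

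Once the labelling is corrected, the rest of your plan works. Each link is the bipartite graph $\pi(L)^*$, so it has no loops or triangles. A double edge in $\pi(L)^*$ is a pair of distinct diagram edges separating the same two regions, i.e.\ a curve meeting $\pi(L)$ twice with crossings on both sides. Primeness excludes this, and when primeness fails the curve gives a bigon in both links.
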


\begin{example}[nonpositive curvature of Dehn complex]
	Figure~\ref{Fig:linkfig8} shows that $link(x_-)$ in the Dehn complex can be read off directly from the diagram. The red part shows the link embedded in the figure-eight knot diagram. It is concluded that the Dehn complex associated to figure-eight knot is indeed nonpositively curved, since the link of both vertices are flag complexes. 
\end{example}

\begin{figure}
	\centering
	\includegraphics[width=0.5\linewidth]{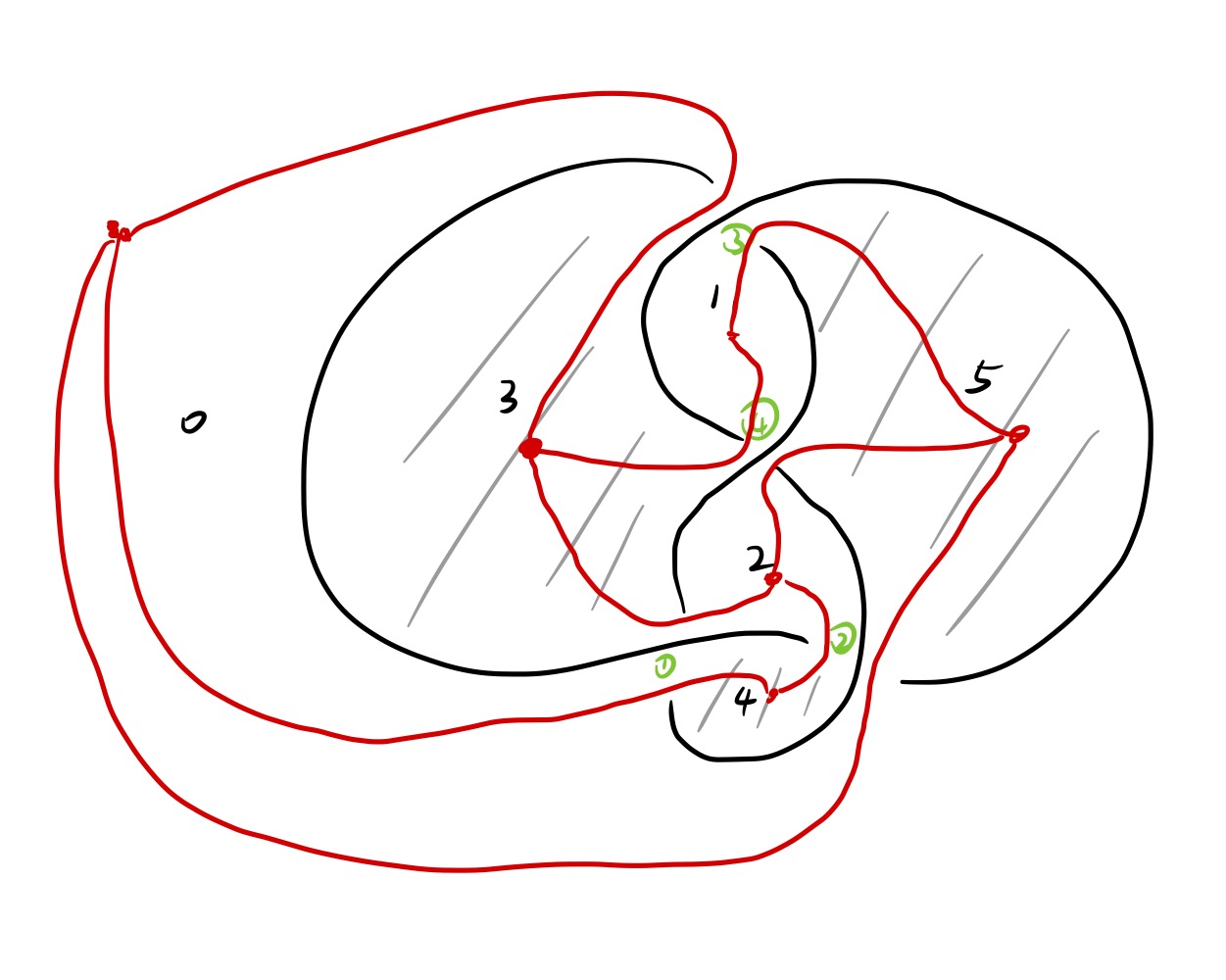}
	\caption{The link of the top vertex of the Dehn complex associated to the figure-eight knot diagram}
	\label{Fig:linkfig8}
\end{figure}

\subsubsection{Coned Dehn complex}
The construction of the generalised Dehn complex will be useful for later proofs.

\begin{proposition}[Coned Dehn complex]\label{Prop:Dehn}
	The coned Dehn complex $\mathcal{D}$ of a diagram on a surface is a retraction of the cube complex $r: \mathcal{C}\to \mathcal{D}$ where on each cube we restrict to a square on $\partial N(L)$.
\end{proposition}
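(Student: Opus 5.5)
The plan is to make the retraction explicit, one cube at a time, and then check that these local maps agree on shared faces and that the image is the coned Dehn complex. After quotienting $S\times\{\pm1\}$ to the cone points $P_\pm$, each octahedron at a crossing $X_i$ is cut along the projection level into eight cubes. Each such cube $Q$ has one corner at the truncated top or bottom ideal vertex, so one face of $Q$ lies on $\partial N(L)$. The opposite corner of $Q$ lies on the crossing arc at the centre of the octahedron. Coordinatise $Q=I^3$ so that the third coordinate is the direction transverse to $\partial N(L)$, running from the face on $\partial N(L)$ inwards. The first two coordinates are then those of the square $Q\cap\partial N(L)$. On $Q$ define $r_Q(x,y,t)=(x,y,t_0)$, where $t_0$ is the value corresponding to the square face of $Q$ that lies in the middle square of the octahedron, i.e.\ the purple Dehn square of Figure~\ref{Fig:oct}. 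Equivalently, $r_Q$ is the straight-line projection along the gray arrows of Figure~\ref{Fig:link_oct}, identified with the square on $\partial N(L)$. This is a deformation retraction of $Q$ onto a square face, via the homotopy $(x,y,t)\mapsto(x,y,(1-s)t+st_0)$.

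The next step is well-definedness. Two cubes meet along faces that are either internal to an octahedron or come from the face gluings of Construction~\ref{const: octa_decomp}(3). In both cases the identification preserves the product structure "square on $\partial N(L)$ $\times$ transverse interval." The gluing maps send the link vertex $L$ to $L$ and the cone vertices $v_\pm$ to $v_\pm$, so they respect the transverse coordinate. Hence the homotopies agree on overlaps and glue to a global deformation retraction $r:\mathcal{C}\to\mathcal{D}$. The faces glued across the projection level are spanned by the top and bottom vertices of the octahedron together with middle vertices. On these faces the transverse direction switches from pointing at the overcrossing to pointing at the undercrossing, so this must be checked. On the shared face, both adjacent cubes restrict to the same segment of the middle square, so the two definitions coincide there.

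Finally I would identify the image. Each octahedron contributes exactly its middle square, subdivided into four squares by the crossing-arc cut. Its vertices are the cone points $P_\pm$ and the four middle points $P_k$, suitably identified. The edges between them run through the regions $R_j$, joining $P_-$ to $P_+$ after the vertices are pulled to $S\times\{\pm1\}$, as in Construction~\ref{const: octa_decomp}(4). Gluing these squares reproduces Construction~\ref{const:Dehn} with $S$ in place of $S^2$. There are two cone vertices, one edge per region of $\pi(L)$ oriented by the checkerboard colouring, and one square per crossing attached along the four region edges around that crossing. This is precisely the coned Dehn complex $\mathcal{D}$.

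I expect the main obstacle to be the compatibility check across the face gluings of Figure~\ref{fig:gluing} after the edge identifications of Figure~\ref{fig:lift}. The square on $\partial N(L)$ of one cube must match the corresponding square of the neighbouring crossing's cube, with the transverse coordinate preserved. I would verify this by tracking the labels $L,v_+,v_-,R$ through a single gluing and appealing to the symmetry of the octahedron.
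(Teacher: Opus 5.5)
Your strategy is the paper's: its proof is only the gray arrows in Figure~\ref{Fig:link_oct}, which collapse each cube along the direction from $\partial N(L)$ onto the middle square. Your cube-by-cube projection, together with the observation that the face gluings send $L\mapsto L$ and $v_\pm\mapsto v_\pm$, is exactly what makes that picture a well-defined retraction.

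The problem is that you have the wrong cell structure for $\mathcal{C}$, and as a result your identification of the image is false as written. Cutting a partially truncated octahedron along the projection level gives \emph{two} cubes, not eight. These are the upper and lower square pyramids with their apex (the top, resp.\ bottom, vertex on $L$) truncated. Each is combinatorially a cube whose face on $\partial N(L)$ is the truncation square and whose opposite face is the \emph{whole} middle square. Three consequences:
\begin{itemize}
\item The crossing arc is not an edge of $\mathcal{C}$. It passes through the interiors of the two cubes, and in the corollary to Theorem~\ref{thm:4term} it is exactly the edge added to those of $\mathcal{C}$.
\item The middle square is not subdivided. In any case the crossing-arc cut of Theorem~\ref{thm:4term} would cut it along its diagonals into triangles, not into four squares.
\item The paper's curvature argument only examines vertices at $P_\pm$ and on $\partial N(L)$. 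That would be incomplete if $\mathcal{C}$ had interior vertices at the centre of the octahedron, as your cubes do.
\end{itemize}
Also, the corners of the middle square are the four middle vertices of the octahedron, which after coning are $P_+,P_-,P_+,P_-$ in cyclic order. The middle points $P_k=\pi(L)\cap\pi(L)^*$ lie on $L$ and are not points of $M^*\setminus N(L)$ at all. So listing $P_k$ among the vertices, and the four-square subdivision, contradict your correct conclusion two sentences later that the image has two vertices and one square per crossing.

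With the correct structure your argument goes through and is simpler. The only face shared across the projection level is the middle square, on which both cubes' projections are the identity, so nothing needs checking there. The real checks are two:
\begin{itemize}
\item The side faces (truncated copies of $Lv_+v_-$) are glued between cubes at neighbouring crossings. There $L\mapsto L$ forces truncation edge to truncation edge and region edge $v_+v_-$ to region edge, so the interval coordinate is preserved.
\item The identifications of Figure~\ref{fig:lift} glue together the two edges from the top (resp.\ bottom) vertex to the two middle vertices pulled up (resp.\ down). These are two vertical edges of one cube, and both collapse to the same cone point.
\end{itemize}
The image is then literally one square per crossing, with corners alternately $P_+,P_-$ and sides the four region edges at that crossing, which is $\mathcal{D}$. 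The square of each cube on $\partial N(L)$ maps isomorphically onto that cube's middle square; this is what the statement's ``restrict to a square on $\partial N(L)$'' refers to.
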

\begin{proof}
	See Figure~\ref{Fig:link_oct} for the retraction to Dehn complex.
\end{proof}

Note that from Construction~\ref{const: octa_decomp}, item (6), we have:
\begin{observation}
	The coned Dehn complex is isomorphic to the complex we obtain from the link complement in thickened surface if we cone the top and bottom surfaces to vertices and run through the same process as in Construction~\ref{const:Dehn}.
\end{observation}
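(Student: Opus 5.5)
The observation claims that the coned Dehn complex $\mathcal{D}$ (the image of the retraction $r:\mathcal{C}\to\mathcal{D}$ of Proposition~\ref{Prop:Dehn}, i.e. the union of the purple middle squares of the octahedra) agrees with the 2-complex obtained by collapsing $S\times\{\pm1\}$ to points $P_\pm$ and applying Construction~\ref{const:Dehn} to $\pi(L)\subset S$. I will write down both cell structures explicitly and match them cell by cell. The identification will be driven by item~(4) of Construction~\ref{const: octa_decomp}: pulling the middle vertices of the octahedra up and down produces exactly one vertical edge per region.

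\emph{Vertices.} Every middle vertex of every octahedron lies on $S\times\{1\}$ or on $S\times\{-1\}$. After coning, these become $P_+$ and $P_-$, so the middle squares have exactly two vertices. These match the two vertices $v_t,v_b$ of Construction~\ref{const:Dehn}, under $v_t\leftrightarrow P_+$ and $v_b\leftrightarrow P_-$.

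\emph{Edges.} Each edge of a middle square joins a vertex pulled up to one pulled down, and it sits over a region $R_j$ adjacent to the crossing. By item~(4) and the edge identifications of Figure~\ref{fig:lift}, all such edges over one $n$-gon region $R_j$ glue to the single edge from $S\times\{-1\}$ to $S\times\{1\}$ through $R_j$. This gives a bijection between the edges of $\mathcal{D}$ and the regions $R_1,\dots,R_{c+\chi}$, which is the 1-skeleton of the Dehn complex. I will orient each edge from top to bottom or bottom to top according to the checkerboard colour of its region. To see that this matches Construction~\ref{const:Dehn}, I will check that at each crossing the two vertices lifted to $S\times\{1\}$ sit in one colour class of opposite regions. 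This follows from the alternating convention in item~(2).

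\emph{2-cells.} Each octahedron contributes one middle square. Its boundary runs through the four regions around $X_i$ in cyclic order, alternating $P_+,P_-$. This is exactly the relator square attached at crossing $X_i$ in Construction~\ref{const:Dehn}, and it is the square the retraction $r$ produces (Figure~\ref{Fig:link_oct}). Since both complexes have exactly these cells with the same attaching maps, the identity on cells gives the isomorphism.

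The main obstacle is the orientation and colour bookkeeping in the edge step. One must check that the choice of which pair of opposite middle vertices goes up, made independently at each crossing, is coherent across the face gluings of item~(3). Otherwise edges over the same region could be identified with inconsistent orientations. I would settle this first by using that along each gluing face $Lv_+v_-$ the $v_+$ vertex is shared, so that the region's checkerboard colour determines globally which end is $P_+$.
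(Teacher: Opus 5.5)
Your proposal is correct and is the paper's argument made explicit. The paper only points to item~(6) of Construction~\ref{const: octa_decomp}, which says the coned Dehn complex is the union of the middle squares. Together with item~(4), that is exactly your matching of the two cone points, the one vertical edge per region, and the one square per crossing.

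The orientation and colour bookkeeping you single out as the main obstacle is unnecessary, and it is slightly misstated. In your own setup the middle-square edges, not its vertices, correspond to regions, so each middle vertex sits between two regions of opposite colour. Since every region edge runs from $P_-$ to $P_+\neq P_-$, each square's attaching map is determined by the cyclic order of the four regions at $X_i$. Coherence of the up/down choices under the face gluings is already part of items~(3)--(4). The orientations in Construction~\ref{const:Dehn} are just labels for reading off relators, which you can transport to $\mathcal{D}$ by the same colour rule.
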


As we can see from item (6) in Construction~\ref{const: octa_decomp}, the surface has a cell structure given by the dual diagram $\pi(L)^*$.

\begin{proposition}
	The generalised Dehn complex is isomorphic to the coned Dehn complex.
\end{proposition}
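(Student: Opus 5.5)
We have to pin down what each object is. The \emph{generalised Dehn complex} is the 2-complex produced by Construction~\ref{const:Dehn} applied to the cellular, checkerboard-colourable diagram $\pi(L)$ on $S$: two vertices $v_t,v_b$, one edge for each region $R_j$ of $S\setminus\pi(L)$, and one square 2-cell for each crossing $X_i$. The \emph{coned Dehn complex} $\mathcal{D}$ is the union of the middle squares of the octahedra of Construction~\ref{const: octa_decomp}, after coning $S\times\{\pm1\}$ to $P_\pm$; by Proposition~\ref{Prop:Dehn} it is the image of the retraction $r$. The plan is to write down an explicit cellular map $\phi$ from the generalised Dehn complex to $\mathcal{D}$, and then show it is a bijection on cells in each dimension that respects attaching maps.

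\emph{Step 1: 0- and 1-cells.} Send $v_t\mapsto P_+$ and $v_b\mapsto P_-$. Each middle square has its four vertices among the middle vertices of its octahedron, and after item (2) these lie at $P_+$ or $P_-$. So the 0-skeleton of $\mathcal{D}$ is $\{P_+,P_-\}$. By item (4), each region $R_j$ contains exactly one edge, running from $S\times\{-1\}$ to $S\times\{1\}$, and every octahedron at a corner of $R_j$ is glued to it. Define $\phi$ on 1-cells by sending the Dehn edge of $R_j$ to this vertical edge. These edges are pairwise distinct, and they exhaust the edges of the middle squares, because each side of a middle square joins an upper middle vertex to a lower one through a single region adjacent to the crossing (this is the identification of $AC$ with $AE$ in Figure~\ref{fig:lift}). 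Orient the edges so that black regions go from $P_+$ to $P_-$ and white regions go the opposite way. This is consistent because the dual graph $\pi(L)^*$ is bipartite exactly when $\pi(L)$ is checkerboard colourable.

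\emph{Step 2: 2-cells and attaching maps.} Send the square of crossing $X_i$ to the middle square of the octahedron at $X_i$. This is a bijection, since there is one octahedron per crossing. It remains to check that the boundary words agree. Going around $X_i$, the four regions alternate black and white. The middle square of the octahedron at $X_i$ has four sides, and these lie in the four quadrants around $X_i$, i.e.\ in those same four regions and in the same cyclic order. Item (2) lifts opposite middle vertices to the same level, which matches the alternation $v_t,v_b,v_t,v_b$ of the Dehn relator square. The over/under data does not change which regions appear; it only fixes the cyclic orientation, and that orientation is the same in both constructions. Finally, both complexes are glued together only along these common edges and vertices, so $\phi$ is a cellular isomorphism.

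\emph{Main obstacle.} The step I expect to be hardest is Step 1: showing that all the middle-square edges lying in one region really are identified to a single edge, not to several. The argument I would try first is to follow the face gluings of item (3) around the $n$ corners of an $n$-sided region. Each gluing along a face $Lv_+v_-$ identifies the two vertical edges at adjacent corners of the region, so going once around the region chains all $n$ of these edges together. This uses that the diagram is cellular, so each region is a disc. If a region were not a disc, there would be no single vertical edge for it, and the isomorphism with the Dehn construction would fail.
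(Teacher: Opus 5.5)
Your proposal is correct and is essentially the paper's own justification made explicit: the paper gives no separate proof, but reads the statement off the Observation that, by items (4) and (6) of Construction~\ref{const: octa_decomp}, coning $S\times\{\pm1\}$ to $P_\pm$ turns the middle squares (one per crossing, glued along one vertical edge per region) into the Dehn complex of the diagram. Your matching of vertices, region edges and crossing squares spells that out; one correction is that the identification of $AC$ with $AE$ in Figure~\ref{fig:lift} is between edges from the apex $A$ to the two lifted middle vertices, not between sides of middle squares, so the merging of each region's edges comes from the face gluings of item (3), as your final paragraph correctly says.
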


From now on, we just abbreviate and call both the \emph{Dehn complex}, when the context of classical or virtual is clear.

\subsubsection{Nonpositive curvature}
With the above setups, we are ready to prove that under mild conditions, our cube complex $\mathcal{C}$ is nonpositively curved.
\begin{notation}
	Let $x\in \mathcal{D}\subset \mathcal{C}$ be a vertex in the cell decomposition, and let $lk(x)$ and $Link(x)$ denote the link of $x$ in the Dehn complex $\mathcal{D}$ and the Aitchison complex $\mathcal{C}$, respectively.
\end{notation}

\begin{definition}[edge representativity]
	Let $\pi(L)$ be a diagram of a link $L$ on an orientable surface $S$ in a compact, orientable, irreducible 3-manifold $M$. The \emph{edge representativity} $e(\pi(L),S)$ is the minimum number of intersections between the diagram $\pi(L)$ with any essential curve $l\subset S$.
\end{definition}

\begin{proposition}\label{prop:npc}[First half of Theorem~\ref{Thm:npc}]
	Let $L$ be an alternating link in $M=S\times I$ with a reduced alternating diagram $D$ on $S$. Suppose further that $e(\pi(L),S)\geq 4$.  Let $\mathcal{C}$ be the cube complex associated to $D$ in $M^*\setminus N(L)$. Then $\mathcal{C}$ and its double $\hat{\mathcal{C}}$ across $\partial E(L)$ are nonpositively curved.
\end{proposition}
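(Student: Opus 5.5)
We apply Gromov's link condition (Proposition~\ref{prop:flag}) vertex by vertex. We must show that for every vertex $x$ of $\mathcal{C}$, and of the double $\hat{\mathcal{C}}$, the link $Link(x)$ is a simplicial flag complex. Since $\mathcal{C}$ consists only of 3-cubes, each $Link(x)$ is a 2-dimensional simplicial complex built from the corners of cubes at $x$. Flagness amounts to three things: no two edges of the link join the same pair of vertices (simpliciality), no loop edges, and every 3-cycle in the 1-skeleton bounds a 2-simplex. The vertices of $\mathcal{C}$ fall into a few types, distinguished by where they sit in each octahedron (Figure~\ref{Fig:cubeoct}). There are the coned vertices $P_\pm$ from $S\times\{\pm1\}$, the vertices on $\partial N(L)$ (centres of truncation faces and the midpoints along the over/under strands), and the vertices interior to the octahedra (centres of octahedra and of faces on the projection-surface level). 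We treat each type separately.

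The interior-type vertices are routine. Their links are read off directly from the local octahedral structure around a crossing, where four quarter-octahedra or face-adjacent cubes meet. Each such link is either an octahedral 2-sphere (the boundary of a cross-polytope) or a subdivided disc, and both are flag. Vertices on $\partial N(L)$ are handled by noting that their links are cones over, or halves of, the corresponding links in the Dehn complex. Here we use the retraction $r:\mathcal{C}\to\mathcal{D}$ of Proposition~\ref{Prop:Dehn}, which lets us express $Link(x)$ as a join or suspension of $lk(r(x))$ with a small complex. Since the join of flag complexes is flag, everything reduces to the links $lk(P_\pm)$ in the coned Dehn complex $\mathcal{D}$. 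For the double $\hat{\mathcal{C}}$ across $\partial E(L)$, the links of boundary vertices are doubled into suspensions or joins of the same pieces, and these are again flag.

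The main step, and the expected obstacle, is showing that $lk(P_+)$ (and symmetrically $lk(P_-)$) in $\mathcal{D}$ is a flag graph, meaning it is simplicial and has girth at least $4$. As in the figure-eight example (Figure~\ref{Fig:linkfig8}), $lk(P_+)$ can be read off the diagram. Its vertices are the regions of $\pi(L)$, that is, the vertices $R_j$ of $\dualD$. Its edges come from the corners at crossings: each crossing contributes two edges, joining the pairs of opposite regions selected by the alternating (over/under) structure. This makes $lk(P_+)$ essentially the checkerboard-type graph on one colour class of corners, drawn on $S$.

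A loop edge would be a crossing where one region meets itself on opposite corners. Such a crossing gives a curve on $S$ through a single crossing, which is ruled out by reducedness if the curve is inessential, and by $e(\pi(L),S)\ge4$ if it is essential. A double edge would be two crossings joining the same pair of regions; the resulting curve meets $\pi(L)$ in two points, so it is excluded by weak primeness if inessential and by $e\ge4$ if essential. A 3-cycle $R_a R_b R_c$ gives a closed curve on $S$ through three regions and three crossings, meeting $\pi(L)$ at most about $3$ times (once at each crossing, counted appropriately). If this curve is essential it contradicts $e\geq4$. If it bounds a disc, we argue as Wise and Sakuma--Yokota do in the classical case: alternation forces the over/under pattern along the three crossings to be inconsistent with all three corners lying in the same colour class for $P_+$. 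Equivalently, the parity of the crossings met by a disc-bounding curve excludes an odd cycle. Making this parity and alternation count precise on $S$, and keeping track of which corners belong to $P_+$ versus $P_-$, is where the care is needed.
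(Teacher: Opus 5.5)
Your framework is the paper's: Gromov's link condition vertex by vertex, with $e(\pi(L),S)\ge 4$ doing the work at the coned vertices. But you misidentify the key link, so the main step is carried out on the wrong graph. At a crossing $X$ the Dehn square runs through the four region-edges in cyclic order. So each of its two corners at $P_+$ joins two \emph{consecutive} regions around $X$, i.e.\ regions sharing a diagram edge, not opposite regions. The up-pulled middle vertices at a crossing lie along one strand, and by alternation each edge of $\pi(L)$ is over at one end and under at the other. Hence each edge contributes its dual edge to $lk(P_+)$ exactly once, and $lk(P_+)$ is the $1$-skeleton of $\pi(L)^*$, as the paper states, not a Tait/checkerboard graph.

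This is not cosmetic. The graph you describe fails to be simplicial for essentially every alternating diagram: the two crossings of a bigon region have the same pair of opposite side regions, giving a double edge. Weak primeness does not exclude this, because a curve through two crossings does not meet $\pi(L)$ transversely in two points on edges; pushed off the crossings, it meets it four times. In the correct picture, the step you single out as the main obstacle disappears. Adjacent regions receive opposite checkerboard colours, so $\pi(L)^*$ is bipartite and has no $3$-cycles at all, essential or not. What must be checked is that there are no loops, which is automatic from the colouring. It must also have no multiple edges: two regions sharing two diagram edges give a curve meeting $\pi(L)$ twice, excluded by $e\ge 4$ if essential and by weak primeness if it bounds a disc.

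The surrounding reduction also does not match $\mathcal{C}$. Each truncated octahedron is cut along its middle square into two cubes, so there are no vertices at centres of octahedra or of faces. The only vertices are $P_\pm$ and the corners of truncation squares on $\partial N(L)$. The link of a boundary vertex is a cone on a $4$-cycle in $\mathcal{C}$ (a suspension of a $4$-cycle in $\hat{\mathcal{C}}$), and is flag directly. It is not a join with $lk(r(x))$, which would be as large as the whole dual graph.

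Conversely, at $P_+$, where the real work is, you never describe the $2$-dimensional link in $\mathcal{C}$. By Lemma~\ref{Lem:subdivide} it is $\pi(L)^*$ with each square coned off from a crossing vertex. Flagness there needs, besides simplicity and girth $\ge 4$ of $lk(P_+)$, that each square have four distinct corners. That is, no region meets a crossing in two corners, which follows from reducedness or $e\ge 4$; this is where your ``loop'' argument actually belongs. Then any $3$-cycle $xRR'$ through a crossing vertex is a face of the square at $X$. There is no $K_4$ because crossing vertices are pairwise non-adjacent.
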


To prove this proposition, we need a few lemmas.

\begin{lemma}
	The link of the vertex $lk(P_+)$ inside the Dehn complex is part of $Link(P_+)$ of the vertex $P_+$ inside $\mathcal{C}$. Similarly for $P_-$. 
\end{lemma}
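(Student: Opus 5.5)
The plan is to realise $lk(P_+)$ as a full subcomplex of $Link(P_+)$ directly from the cell structures. By Proposition~\ref{Prop:Dehn}, the Dehn complex $\mathcal{D}$ is a subcomplex of $\mathcal{C}$. Each square of $\mathcal{D}$ is a face of a cube of $\mathcal{C}$: it is the face lying on (the retract toward) $\partial N(L)$ inside the cube cut from an octahedron. Both complexes have $P_\pm$ as vertices, and the containment $\mathcal{D}\subset\mathcal{C}$ sends $P_+$ to $P_+$. So it suffices to see that corners of cells of $\mathcal{D}$ at $P_+$ give simplices of $Link(P_+)$, and that this assignment is injective and compatible with faces.

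\textbf{Step 1 (vertices).} A vertex of $lk(P_+)$ is an end of an edge of $\mathcal{D}$ at $P_+$. These edges are the region edges $R_j$ running from $S\times\{1\}$ (coned to $P_+$) through the region to $S\times\{-1\}$, as in item (4) of Construction~\ref{const: octa_decomp}. Each such edge is also an edge of $\mathcal{C}$, subdivided at the projection level when we cut the octahedra along $S\times\{0\}$. Its upper half is an edge of $\mathcal{C}$ incident to $P_+$, so it determines a vertex of $Link(P_+)$. Under the retraction $r$ of Proposition~\ref{Prop:Dehn}, the edge of $\mathcal{D}$ at $P_+$ corresponds to the edge of $\mathcal{C}$ at $P_+$ that is fixed by $r$ near $P_+$. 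Distinct region edges give distinct edges of $\mathcal{C}$, so the map on vertices is injective.

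\textbf{Step 2 (edges).} An edge of $lk(P_+)$ is a corner at $P_+$ of a square of $\mathcal{D}$, that is, the middle square of an octahedron at a crossing $X_i$. Near $P_+$, such a corner lies in the upper half of that octahedron, in one of its cubes, as a 2-dimensional face of that cube containing $P_+$. That face is spanned by the two region edges meeting at the corner, so the corresponding edge of $Link(P_+)$ joins the images of the two link vertices from Step 1. Faces are thus preserved, and we get a simplicial map $lk(P_+)\to Link(P_+)$ that is injective on cells. Its image is the subcomplex spanned by those cube-corners that meet the Dehn square, which is why I expect the image to look like the ``equator'' of $Link(P_+)$ in Figure~\ref{Fig:link_oct}.

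\textbf{Step 3 ($P_-$).} The case of $P_-$ follows by the symmetry that exchanges $S\times\{1\}$ with $S\times\{-1\}$, using the lower halves of the octahedra.

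The main obstacle is Step 2: verifying carefully that the Dehn-square corner at $P_+$ really is a face of a single cube of $\mathcal{C}$, and not something split across two cubes by the cut at $S\times\{0\}$. I would check this in one octahedron by labelling vertices as in Figure~\ref{fig:lift}, where edges $AC$ and $AE$ are identified. Injectivity also needs checking: two distinct corners of Dehn squares must not be identified in $\mathcal{C}$. This follows because the identifications in $\mathcal{C}$ are exactly the face gluings of Construction~\ref{const: octa_decomp}, item (3), which restrict to the edge identifications defining $\mathcal{D}$.
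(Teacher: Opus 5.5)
Your argument is correct in substance and is essentially the paper's. The paper simply notes that $\mathcal{D}$ is the union of the middle squares of the octahedra, onto which $\mathcal{C}$ retracts. Hence $\mathcal{D}$ is a subcomplex of $\mathcal{C}$ through $P_\pm$, and corners of its squares at $P_\pm$ are corners of cube faces, giving $lk(P_\pm)\subset Link(P_\pm)$.

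One correction to your picture of $\mathcal{C}$, which also dissolves your ``main obstacle''. $\mathcal{C}$ is obtained by cutting each truncated octahedron along its middle (Dehn) square into an upper and a lower cube. So the region edges are unsubdivided edges of $\mathcal{C}$ running from $P_+$ to $P_-$. Each Dehn square is the common face of those two cubes, opposite their truncation faces on $\partial N(L)$ rather than lying on $\partial N(L)$. Hence each of its corners at $P_+$ is just the edge shared by two triangles of $Link(P_+)$, one from each cube.
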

\begin{proof}
	This can be seen from Construction~\ref{const: octa_decomp} of the Dehn complex. Since the Dehn complex $\mathcal{D}$ is the retraction of each octahedron to its middle level, it follows that the link $lk(P_+)$ is part of $Link(P_+)$.
\end{proof}

\begin{lemma}\label{Lem:subdivide}
	The link $Link(P_+)$ of the cube complex $\mathcal{C}$ can be obtained by subdividing the dual structure $\pi(L)^*$ of the surface $S$. Similarly for $Link(P_-)$.
\end{lemma}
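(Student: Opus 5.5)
The plan is to compute $Link(P_+)$ cube by cube and then assemble the pieces along the face gluings of Construction~\ref{const: octa_decomp}. After coning $S\times\{+1\}$ to $P_+$, every octahedron at a crossing $X_i$ has exactly two of its middle vertices at $P_+$; these are the two vertices lying over the two regions $R_j,R_k$ that sit diagonally opposite at $X_i$. Cutting the octahedron along the projection level and into cubes as in Figure~\ref{Fig:cubeoct} gives one cube for each of these corners. So I would first list the cubes of $\mathcal{C}$ that meet $P_+$. There is one for each pair (crossing $X_i$, region $R_j$ adjacent to $X_i$ whose middle vertex was pulled up), and each such cube contributes a triangle to $Link(P_+)$.

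Second, I would identify the vertices of that triangle. They are the three edges of the cube at $P_+$:
\begin{itemize}
\item the vertical edge running through $R_j$ from $S\times\{-1\}$ to $S\times\{1\}$ (item (4) of the construction), which gives a vertex of the link sitting at $R_j$;
\item two edges of the cube that lie in the faces of the octahedron, which I expect to correspond to the two middle points $P_a,P_b$ on the edges of $\pi(L)$ at $X_i$ that bound the corner of $R_j$;
\item the crossing arc direction, which gives a vertex at $X_i$ itself. This is the image of the edge that retracts onto the middle square (Proposition~\ref{Prop:Dehn}), so it recovers $lk(P_+)$ as a subcomplex, consistent with the preceding lemma.
\end{itemize}
In this way each corner of a region at a crossing becomes a small cell with vertices $R_j$, $P_a$, $X_i$, $P_b$. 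I would check whether this cell is a triangle or a square cut by a diagonal: I expect the actual link triangle to be spanned by the edge $R_jX_i$ together with one of $P_a,P_b$, or a similar configuration.

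Third, I would glue. Faces of octahedra are glued across $L v_+ v_-$ as in Figure~\ref{fig:gluing}, and these gluings match link simplices along the edges through the middle points $P_a$. Going around a region $R_j$, the corner cells fit into a fan around the vertex $R_j$, and going around $X_i$ they close up. The union is therefore the cell structure of $S$ given by $\pi(L)^*$, refined by adding the crossings $X_i$ and the middle points as new vertices, which is a subdivision of $\pi(L)^*$. The statement for $P_-$ follows in the same way, using the other pair of opposite regions at each crossing.

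The main obstacle is the local bookkeeping in the second step: determining exactly which edges of each cube end at $P_+$ after the vertical identifications $AC\sim AE$ and $FB\sim FD$ (Figure~\ref{fig:lift}). Those identifications are what make several cube corners collapse onto $P_+$. I would settle this first by drawing one octahedron with its four cubes and labelling the vertices with $A,\ldots,F$.
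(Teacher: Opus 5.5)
Your overall strategy is reasonable, but there is a genuine gap in your second step. It is not unfinished bookkeeping: the local picture you propose is wrong, so the complex you would assemble is not $Link(P_+)$.

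\textbf{The local picture at a corner.} $\mathcal{C}$ is obtained by cutting each octahedron along its middle (Dehn) square, so each crossing gives exactly two cubes, the upper and lower halves. Each up-pulled middle vertex is a corner of \emph{both} cubes. It therefore contributes two triangles, glued along the edge of $lk(P_+)$ given by that corner of the Dehn square, for $4c$ triangles in all.

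The middle vertices also do not sit over regions. The edges of the middle square are the vertical region edges (the edge $v_+v_-$ labelled $R$ in Figure~\ref{fig:gluing}). So a middle vertex $M$ at $X_i$ lies between the two regions $R_j,R_k$ on either side of the edge of $\pi(L)$ leaving $X_i$ in the direction of $M$. Hence the three edges of a cube at such a corner are the two region edges through $R_j$ and $R_k$, together with the truncated edge $TM$ (upper cube) or $BM$ (lower cube) running from $\partial N(L)$ to $P_+$.

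The crossing arc joins the over- and under-crossing and never reaches $P_+$; it is not an edge of $\mathcal{C}$ at all. No edge at $P_+$ corresponds to a middle point either. So the middle points are not vertices of $Link(P_+)$, and the edges of $\pi(L)^*$ are not subdivided. Your cells $R_jP_aX_iP_b$, cut by diagonals, would need $8c$ triangles, while there are only $4c$ cube corners at $P_+$. Note also that $lk(P_+)$ is the subcomplex spanned by the region vertices (the edges $R_jR_k$ of $\pi(L)^*$); it is not produced by the crossing vertex.

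\textbf{What the assembly step needs.} To reach the paper's subdivision (one new vertex per crossing, joined to the four corners of its square of $\pi(L)^*$), you must identify the edges from the link to $P_+$.
\begin{itemize}
\item By the identification $AC\sim AE$ of Figure~\ref{fig:lift}, the two edges $TM$ of the upper cube at $P_+$ are a single edge from the over-crossing at $X_i$ to $P_+$. This edge is the crossing vertex, and the upper cube gives the triangles $X_iR_jR_k$.
\item In the lower cube, $BM$ is not identified within the octahedron. After gluing, it is the corresponding edge of the crossing $X_j$ at the other end of the edge of $\pi(L)$ in the direction of $M$. So the lower cube gives the triangle $X_jR_jR_k$, on the other side of the same edge $R_jR_k$.
\item Because the diagram is alternating, each edge of $\pi(L)$ is an up-direction at exactly one of its endpoints. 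Hence each square of $\pi(L)^*$ receives exactly its four cone triangles from its center: two from its own upper cube and two from lower cubes of neighbouring octahedra.
\end{itemize}
Your third step has the corner cells around $X_i$ closing up within a single octahedron, so it misses this cross-octahedron contribution. That contribution is exactly where the identifications and the alternating hypothesis enter.
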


\begin{proof}
	From Construction~\ref{const: octa_decomp}, we can see that the Dehn complex is dual to the crossing arcs. Suppose we take the dual structure $\pi(L)^*$. Add one vertex at each crossing, or the center of each square in $\pi(L)^*$. Add one edge from each crossing vertex to each corner vertices of the square. See Figure~\ref{Fig:linkedges} for the subdivision.
	
	Similarly for $P_-$.
\end{proof}

\begin{figure}
	\centering
	\includegraphics[width=0.4\linewidth]{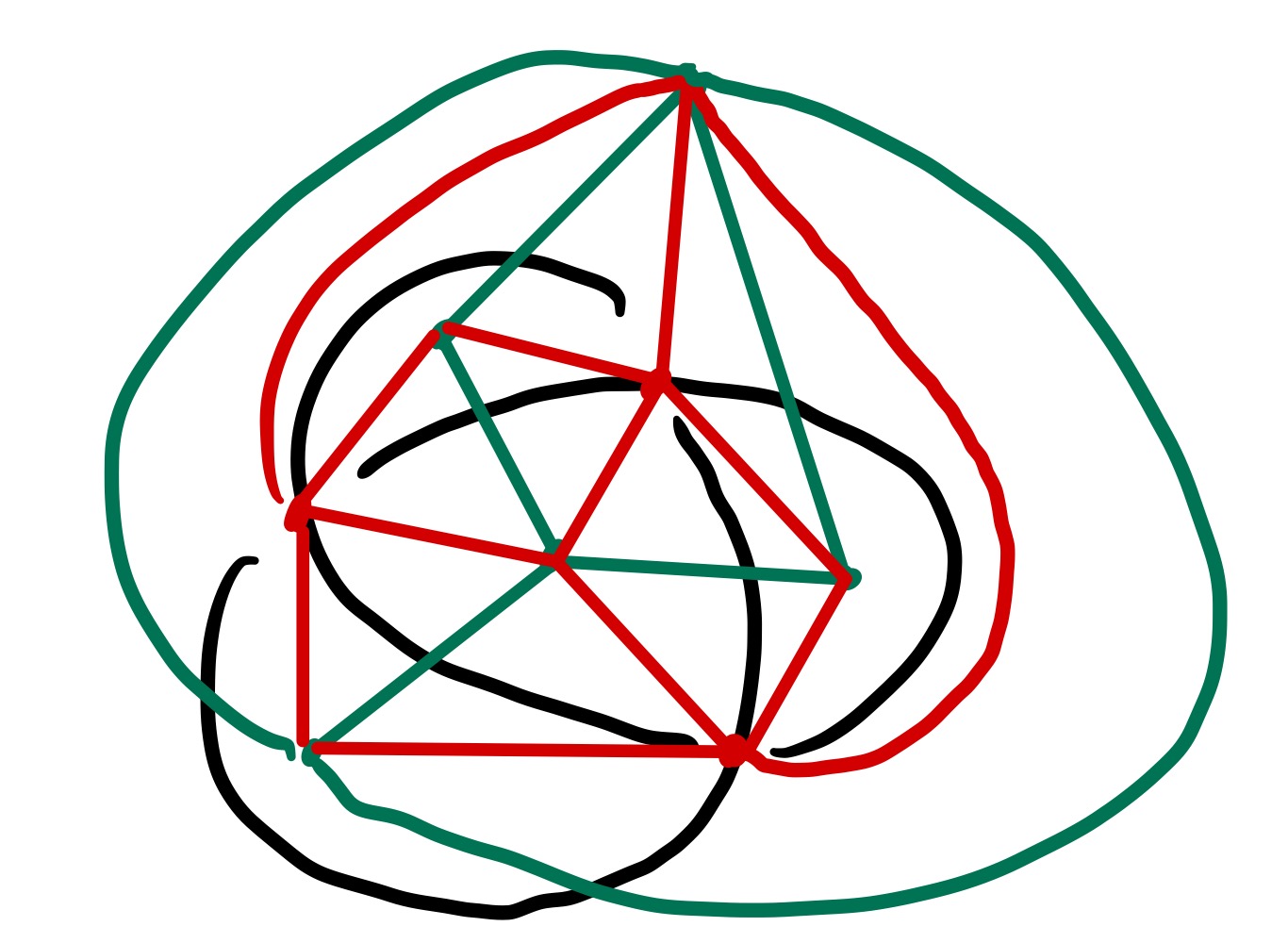}
	\caption{A schematic picture of the subdivision of the link of a vertex in the classical Aitchison complex. Green and red edges in $Link(P_+)$. Green represents the edges coming from $lk(P_+)$, and red represents the further subdivision.}
	\label{Fig:linkedges}
\end{figure}

\begin{proof}[Proof of Proposition~\ref{prop:npc}]
	The link of vertices in $\hat{\mathcal{C}}$ can be divided into two types, one tiles the link, called $Link(P_0)$, and the other tiles the boundary surfaces where the vertex corresponds to the coning point, called $Link(P_+)$ or $Link(P_-)$. Observe that $\hat{\mathcal{C}}$ induces a cubing of $\partial N(L)$, such that each vertex of this cubing are incident to 4 edges and 4 squares. Therefore, the link $Link(P_0)$ in $\hat{\mathcal{C}}$ is made up of 4 spherical triangles, which glue to become a unit hemisphere. These types of links are all flags in Gromov's condition.
	
	For the other type of link, e.g., the link $Link(P_+)$, we consider the diagram $D$ and its dual $D^*$. We can see that $D^*$ gives a disc decomposition of $S$, isomorphic to $lk(P_+)$. By Lemma~\ref{Lem:subdivide}, we can further subdivide to get $Link(P_+)$, and the subdivision consists of edges joining a crossing and a region point. We color $lk(P_+)$ to be green and the further subdivision to be red. See Figure~\ref{Fig:linkedges} for the subdivision and red-green coloring. The only possibility for the link $Link(P_+)$ to violate Gromov's flag condition is from the edges coming from $lk(P_+)$, shown as green in Figure~\ref{Fig:linkedges}. Since we have edge representativity $e(\pi(L),S)\geq 4$, this can be avoided because otherwise there will exist an essential curve that intersect the diagram less than 4 times.
	
	Therefore, after examining all possibilities for links for vertices in the cube complex $\hat{\mathcal{C}}$, the cube complex $\mathcal{C}$ is nonpositively curved.
\end{proof}

Links of different types of vertices can be drawn as Figure 6 and Figure 7 in Kim-Kim-Yoon~\cite{kim2018octahedral}. The link $Link(P_+)$ and $Link(P_-)$ together form the boundary of the thickened surface $S\times \{\pm 1\}$, which satisfies Proposition~\ref{prop:flag}.

\begin{corollary}[Second half of Theorem~\ref{Thm:npc}]\label{cor:ess}
	All edges of the cube complex $\mathcal{C}$ are essential.
\end{corollary}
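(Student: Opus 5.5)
Since this is stated as a corollary of Proposition~\ref{prop:npc}, I would deduce it from nonpositive curvature using the standard CAT(0) machinery. By Proposition~\ref{prop:npc}, the double $\hat{\mathcal{C}}$ of $\mathcal{C}$ across $\partial E(L)$ is nonpositively curved, and it is compact. By the Cartan--Hadamard theorem (Bridson--Haefliger~\cite{bridson2013metric}), its universal cover $\widetilde{\hat{\mathcal{C}}}$ with the piecewise Euclidean cubical metric is CAT(0). I work in the double because $\mathcal{C}$ itself has boundary $\partial N(L)$. This boundary is totally geodesic in $\hat{\mathcal{C}}$, and a local isometry into a nonpositively curved space is $\pi_1$-injective, so the inclusion $\mathcal{C}\hookrightarrow\hat{\mathcal{C}}$ will transfer conclusions back to $\mathcal{C}$.

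Next I make precise what ``essential'' means. An edge $e$ of $\mathcal{C}$ is either a loop, or it joins the coning vertices $P_\pm$ or the boundary $\partial N(L)$. In each case the requirement is that $e$ is not homotopic, relative to its endpoints, into the vertex/boundary set; in particular no edge is a null-homotopic loop and no edge can be homotoped rel endpoints into $\partial N(L)$. Each edge of a cube, with its unit-length Euclidean metric, is a local geodesic in $\hat{\mathcal{C}}$. I would check this using the vertex links: a concatenation of edges is locally geodesic at a vertex $v$ exactly when the two incoming directions are at angular distance at least $\pi$ in $Link(v)$. In a flag link with all edge lengths $\pi/2$, two points are at distance $\geq\pi$ iff they are not joined by a path of length at most two edges, that is, iff they are not adjacent and have no common neighbour. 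A single edge has no interior vertices, so it is automatically a local geodesic. In a CAT(0) space, local geodesics are global geodesics and are unique in their homotopy class rel endpoints. Therefore a lift $\tilde e$ of $e$ is the unique geodesic between its endpoints and has positive length, so $e$ is not homotopic rel endpoints to a constant path.

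It remains to rule out homotopies into the boundary, i.e.\ to show that $\tilde e$ does not join two points of the same lift of the peripheral torus or annulus, or of the same coning point. For an edge with an endpoint at $P_+$ (the vertical edges through regions), I would use the following argument. A lift of $\partial N(L)$ is a convex subcomplex of the CAT(0) cube complex, because it is a union of hyperplane-bounded cubes and is locally convex: by the earlier computation, its vertex links are full hemispheres. If both endpoints of $\tilde e$ lay on the same convex lift, uniqueness of geodesics would force $\tilde e$ to lie inside that lift. This is impossible, since $e$ is transverse to $\partial N(L)$ and is not contained in it. For edges lying in $\partial N(L)$ itself, essentiality amounts to being nontrivial in the flat cusp; this follows because that cubing is a Euclidean square tiling.

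I expect the main obstacle to be the convexity and peripheral step. It requires showing that the distinct lifts of $P_+$, $P_-$ and $\partial N(L)$ are genuinely separated, which relies on the hyperbolicity hypothesis to exclude the case of an edge that is parallel to the boundary. If the local convexity argument is delicate at the vertices $P_\pm$, I would first try to argue via the hyperplanes crossed by $\tilde e$: a geodesic edge crosses exactly one hyperplane, and that hyperplane separates the two endpoint lifts in the CAT(0) cube complex.
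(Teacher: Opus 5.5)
Your proposal is correct and is essentially the paper's argument. The paper's proof simply observes that, since $\mathcal{C}$ and its double $\hat{\mathcal C}$ are nonpositively curved, every edge is a local geodesic and hence is not null-homotopic when lifted to $\hat{\mathcal C}$; this is exactly your Cartan--Hadamard and uniqueness-of-geodesics step.

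Your extra peripheral convexity step is valid but not needed. Every edge of $\mathcal{C}$ not lying in $\partial N(L)$ joins $P_+$ to $P_-$ or $\partial N(L)$ to $P_\pm$, so its endpoints never lie on a single lift of one peripheral piece, and no hyperbolicity hypothesis enters. Your aside on link distances is slightly off: in an all-right link two vertices are at distance $\geq\pi$ exactly when they are non-adjacent, and a common neighbour gives distance exactly $\pi$. You never use that criterion, so it does not affect the proof.
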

\begin{proof}
	Since edges in the decomposition are all edges of the nonpositively curved cube complex $\mathcal{C}$, they are all local geodesics, and they are also not null-homotopic when they are lifted to the dual complex $\hat{\mathcal{C}}$.
\end{proof}

Notice that this proof is much less wordy than the one from Sakuma-Yokota~\cite{sakuma2018application}, since our vertices go to totally geodesic surfaces. Sakuma-Yokota have to cap off the upper and lower 2-spheres by balls, and therefore take into consideration all the paths in the decomposition, whereas we only have to look at the edges.

\section{From octahedra to tetrahedra}\label{sec:tet}

\begin{theorem}\label{thm:4term}
	The link complement in a thickened surface $S\times I \setminus L$ admits a triangulation $\mathcal{T}$ by tetrahedra, each of which has two ideal vertices and two truncated vertices.
\end{theorem}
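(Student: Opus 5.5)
We obtain $\mathcal{T}$ by subdividing each octahedron of Construction~\ref{const: octa_decomp} along its vertical crossing arc, i.e.\ the edge joining the two ideal vertices that lie on the overstrand and the understrand at the crossing $X_i$. Each octahedron has two such ideal vertices, $L_{\mathrm{top}}$ and $L_{\mathrm{bot}}$, and four middle vertices. After step (2) of the construction, two of the middle vertices lie on $S\times\{+1\}$ and two on $S\times\{-1\}$, and these four are the truncated vertices. Label the middle vertices cyclically $A,B,C,D$ around the middle square, with $A,C$ pulled up and $B,D$ pushed down. Adding the axis $L_{\mathrm{top}}L_{\mathrm{bot}}$ cuts the octahedron into the four tetrahedra
\[
L_{\mathrm{top}}L_{\mathrm{bot}}AB,\quad L_{\mathrm{top}}L_{\mathrm{bot}}BC,\quad L_{\mathrm{top}}L_{\mathrm{bot}}CD,\quad L_{\mathrm{top}}L_{\mathrm{bot}}DA .
\]
Each of these has exactly two ideal vertices (on $L$) and two truncated vertices (one on $S\times\{1\}$ and one on $S\times\{-1\}$). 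These are exactly the partially truncated tetrahedra of Figure~\ref{fig:deform}, so this subdivision simply reverses the Sakuma--Yokota deformation of the four prisms around a crossing.

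Next we check that the subdivision is compatible with the face gluings of Construction~\ref{const: octa_decomp}(3). The key point is that the new cut is interior to each octahedron. The triangular faces of the octahedra are not subdivided at all, since every boundary face of an octahedron is a face of exactly one of the four tetrahedra. So the gluing of triangular faces between neighbouring octahedra, such as the face $Lv_+v_-$ in Figure~\ref{fig:gluing}, passes unchanged to the tetrahedra. The new internal faces $L_{\mathrm{top}}L_{\mathrm{bot}}A$, and so on, are glued in pairs within a single octahedron. Therefore the glued space is the same space as before: the link complement, or the exterior after truncation. Its cells are tetrahedra, and faces are identified by face-pairings, so $\mathcal{T}$ is a triangulation in the ideal/truncated (pseudo-manifold) sense. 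The edges of $\mathcal{T}$ are the original octahedral edges, including the region edges running from $S\times\{-1\}$ to $S\times\{1\}$ described in item (4), together with one new crossing-arc edge per crossing. Each tetrahedron contributes 6 edges and has 4 tetrahedra arranged around the new axis.

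The main obstacle is verifying the vertex types after the identifications described before Figure~\ref{fig:lift}, where edges such as $AC$ and $AE$ are identified when the middle vertices are pulled. We must confirm that no tetrahedron degenerates, meaning that none has two of its vertices collapsed onto the same side in a way that flattens it. We must also confirm that every truncated vertex of a tetrahedron really lies on $S\times\{\pm1\}$ rather than being ideal. For the first, we use the alternating and reduced hypotheses: consecutive middle vertices $A,B$ of a square lie in distinct regions of the diagram, and one is pulled up while the other is pushed down. Hence the edge $AB$ runs from $S\times\{1\}$ to $S\times\{-1\}$ and is nondegenerate. For the second, we check that the truncation triangles at the $\pm1$ vertices glue up to a triangulation of $S\times\{\pm1\}$ (dual to the subdivision in Lemma~\ref{Lem:subdivide}), while the triangles at the ideal vertices tile the tori $\partial N(L)$. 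This also identifies $\mathcal{T}$ with the stellated triangulation to be compared later.
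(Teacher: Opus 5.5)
Your proposal is correct and follows the paper's proof. The paper likewise cuts each partially truncated octahedron along its crossing arc (the axis joining the over- and under-crossing vertices) into four tetrahedra, each spanned by the two ideal vertices on $L$ and two adjacent middle vertices, one truncated on $S\times\{1\}$ and one on $S\times\{-1\}$, giving $4c$ tetrahedra; your extra check that the face pairings pass unchanged to the tetrahedra goes beyond what the paper writes, and the only quibble is that adjacent middle vertices land on opposite boundary surfaces simply by step (2) of the construction, so the alternating and reduced hypotheses are not what does the work there.
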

\begin{proof}
	The proof follows exactly from the construction from Kim-Kim-Yoon~\cite{kim2018octahedral}. Cut along the crossing arc inside each octahedron in the partially truncated octahedral decomposition of $S\times I\setminus L$. We can obtain 4 partially truncated tetrahedra, all with 2 ideal vertices and 2 truncated vertices. See Figure~\ref{Fig:4term} for more information of the cutting. If the link complement is decomposed into $c$ octahedra, then it is automatically decomposed into $4c$ tetrahedra.
\end{proof}
\begin{figure}
	\centering
	\includegraphics[width=0.6\linewidth]{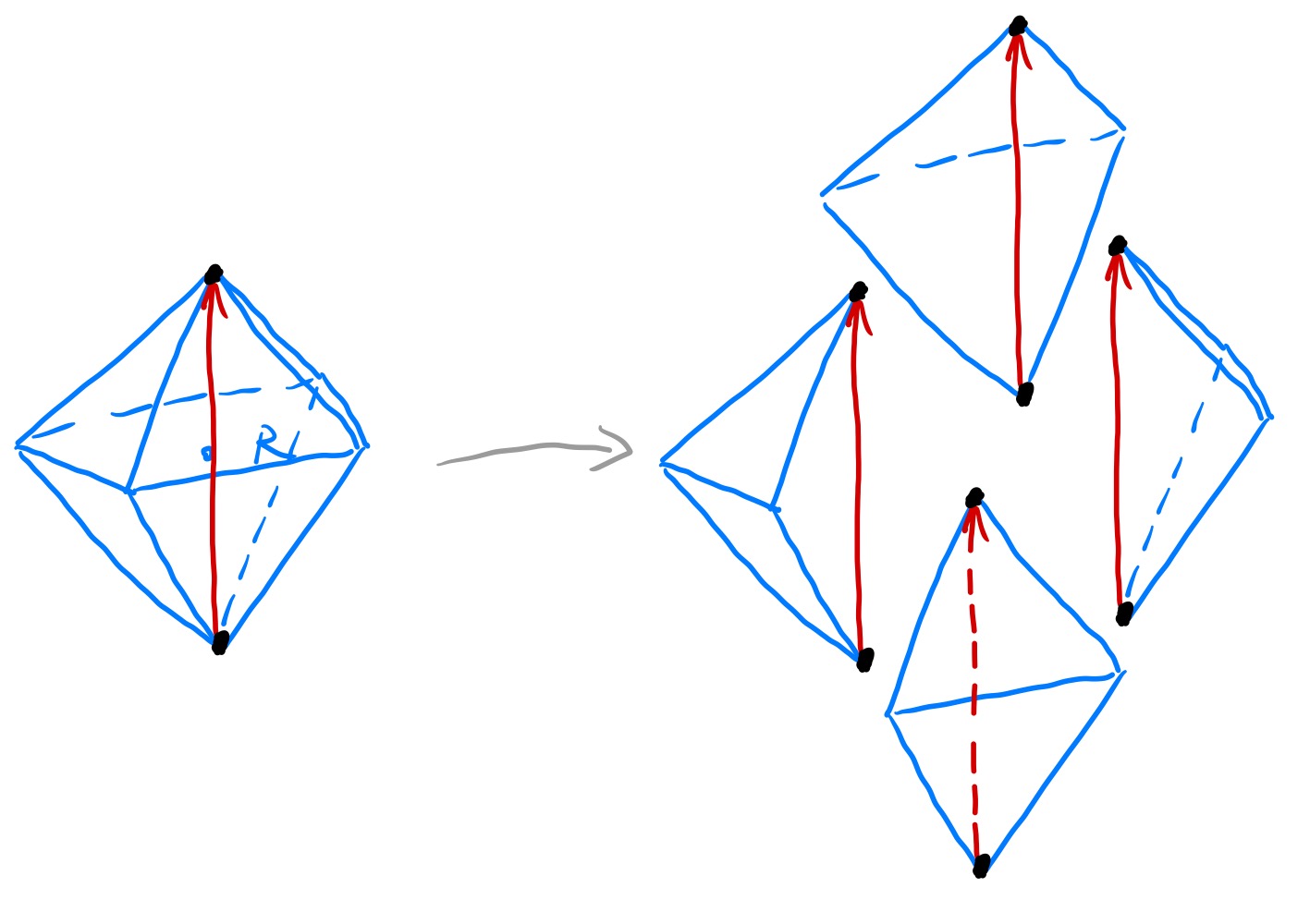}
	\caption{The four term triangulation coming from the octahedral decomposition.}
	\label{Fig:4term}
\end{figure}
\begin{corollary}
	All edges of the triangulation $\mathcal{T}$ are essential when $S\times I\setminus L$ is hyperbolic. 
\end{corollary}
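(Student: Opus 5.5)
The plan is to reduce the statement to Corollary~\ref{cor:ess} by sorting the edges of $\mathcal{T}$ into types and comparing each type with edges or local geodesics of the cube complex $\mathcal{C}$.

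The triangulation $\mathcal{T}$ is obtained from the octahedral decomposition by cutting along the crossing arc of each octahedron. Its edges therefore come in exactly three types, listed next.
\begin{enumerate}
\item \emph{Crossing arcs.} These join the over-vertex and the under-vertex of an octahedron, so they run from $L$ to $L$.
\item \emph{Edges from a link vertex to a coning vertex.} These are the side edges of the octahedra, joining an ideal vertex on $L$ to a truncated vertex on $S\times\{\pm1\}$.
\item \emph{Region edges.} These run through a region of the diagram from $S\times\{-1\}$ to $S\times\{1\}$ (item (4) of Construction~\ref{const: octa_decomp}).
\end{enumerate}
Essential means that none of these arcs is homotopic, rel the boundary of $E(L)\cup S\times\{\pm1\}$, into that boundary. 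Equivalently, after lifting to the universal cover, no such arc has both endpoints on the same boundary component.

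First, I identify each edge type inside $\mathcal{C}$ (or its double $\hat{\mathcal{C}}$).
\begin{itemize}
\item Type (2) edges and type (3) edges are, up to subdivision, concatenations of edges of $\mathcal{C}$ that pass through the cube vertices at the octahedron centers.
\item A type (1) crossing arc is dual to the Dehn-complex square (item (6) of Construction~\ref{const: octa_decomp}). Inside $\mathcal{C}$ it is the union of the two cube edges joining the center of the octahedron to the top and bottom truncated vertices.
\end{itemize}

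Second, I show that each such concatenation is a local geodesic in the CAT(0) metric of $\hat{\mathcal{C}}$, which is available from Proposition~\ref{prop:npc}. The only places where a path of cube edges can fail to be locally geodesic are its interior vertices. At such a vertex $x$, it suffices that the two incoming directions are at distance $\ge\pi$ in $Link(x)$. For the center of an octahedron, the link is an octahedral 2-sphere, and the two directions used are antipodal, so their distance is $\pi$. For vertices on the middle level, I use the explicit links described in Lemma~\ref{Lem:subdivide} and the proof of Proposition~\ref{prop:npc}.

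Third, I rule out each kind of inessentiality.
\begin{itemize}
\item By the Cartan–Hadamard theorem, a local geodesic in the nonpositively curved $\hat{\mathcal{C}}$ lifts to a genuine geodesic in the universal cover, so the arcs are not null-homotopic.
\item To show that an arc cannot be homotoped into a boundary component, I use that the boundary pieces are convex. The lifts of $\partial N(L)$ are flat (the links of $P_0$ are hemispheres), and the coned boundary surfaces correspond to vertices $P_\pm$.
\item If a geodesic arc had both endpoints on the same convex lift, then by convexity it would lie inside that lift. This contradicts the fact that the arc leaves the boundary transversally; for example, at $P_0$ the directions of crossing arcs are interior points of the hemisphere.
\item Hyperbolicity of $S\times I\setminus L$ then lets me transfer essentiality from the cube complex to the actual manifold, because $M^*\setminus N(L)$ is homotopy equivalent to the complement relative to its boundary pattern. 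Essential arcs are then homotopic to unique hyperbolic geodesics.
\end{itemize}

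The main obstacle is the second step, the angle check at the middle-level vertices, that is, at the subdivision points on the projection surface where the crossing arc or a side edge bends. There I need the two directions to be at distance at least $\pi$ in the link. I would read this off from the subdivided link of Lemma~\ref{Lem:subdivide}, using $e(D,S)\ge4$ to exclude short cycles that would shorten the distance. If that angle check fails, my fallback is to work with the region and side edges directly as edges of $\mathcal{C}$, which are covered by Corollary~\ref{cor:ess}. For the crossing arcs I would instead use that each one is dual to a Dehn-complex square, so it meets the hyperplane given by that square exactly once and is therefore essential.
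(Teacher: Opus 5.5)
For the region edges and the side edges, your fallback is exactly the paper's argument: they are single edges of $\mathcal{C}$, so Corollary~\ref{cor:ess} applies. Your main-route claim that they are concatenations through octahedron centers is false; they are edges of the octahedra and never enter the interior.

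\textbf{The gap: the crossing arcs.} These are the only new case. The paper handles them outside the cube complex. A crossing arc lies on a checkerboard surface, which is essential (quasifuchsian) by Howie--Purcell, and it is an essential arc of that surface, hence essential in the manifold. Your fallback substitute, that the crossing arc ``meets the hyperplane given by the Dehn square exactly once'', fails for three reasons.
\begin{itemize}
\item The Dehn complex $\mathcal{D}$ is not a hyperplane of $\mathcal{C}$. The middle squares are faces of cubes, and $\mathcal{D}$ is the spine onto which $\mathcal{C}$ retracts, with $\mathcal{C}\setminus\mathcal{D}$ a collar of $\partial N(L)$.
\item $\mathcal{D}$ is not a two-sided surface either: $n$ Dehn squares meet along the edge of an $n$-gon region. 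So counting intersections with it is not a homotopy invariant.
\item The actual hyperplanes through a crossing arc are the two checkerboard hyperplanes, and they contain the arc rather than cross it.
\end{itemize}

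\textbf{Repairing your main route.} Your main route can be repaired, and it then gives a genuinely different, self-contained argument, but not with the cell structure you describe.
\begin{itemize}
\item In the paper's $\mathcal{C}$, each octahedron is cut only along the projection-surface level. Its two cubes are therefore the upper and lower pyramids, truncated at the link vertices.
\item The octahedron center is the center of their common face (a Dehn square), not a vertex.
\item In each of the two cubes, the crossing arc is the segment $\{0\}\times\{0\}\times[-1,1]$, i.e.\ the intersection of the two checkerboard midcubes. Hence it is a local geodesic that crosses the Dehn square orthogonally and meets $\partial N(L)$ orthogonally at both ends.
\item No vertex of $\mathcal{C}$ lies on it, so your ``main obstacle'' at middle-level vertices does not arise.
\end{itemize}

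Your convexity step then works. Lifts of $\partial N(L)$ to the universal cover are connected and locally convex (their link is the equator of the hemisphere $Link(P_0)$), hence convex. By uniqueness of geodesics in a CAT(0) space, the lifted arc cannot have both endpoints on one lift.

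You should also drop the claim that $M^*\setminus N(L)$ is homotopy equivalent to the link exterior. Coning $S\times\{\pm1\}$ kills the fundamental groups of those surfaces. What you need is only that the collapse map $E(L)\to M^*\setminus N(L)$ sends an inessential arc to an inessential arc, so essential in $\mathcal{C}$ implies essential in $E(L)$. Hyperbolicity plays no role in that step.

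Compared with the paper, this route avoids citing the essentiality of checkerboard surfaces and uses only Proposition~\ref{prop:npc}. The price is that the geodesic and convexity checks must be made explicit.
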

\begin{proof}
	Corollary~\ref{cor:ess} implies that all edges of the cube are essential. These are all edges of the triangulation except for the added edges, which are crossing arcs. However, the crossing arcs are essential because they lie on a quasifuchsian surface, by work of Howie-Purcell~\cite{HowiePurcell}.
\end{proof}

\subsubsection{Relationship with stellated triangulation or bipyramid decomposition}

The octahedral decomposition and its resulting triangulation are the same as the bipyramid triangulation used by Adams et al~\cite{MR4046919} from their 2020 work, where they assign angles to the bipyramids and study the volume of links in thickened surfaces. We give a brief introduction of how they are related here. We also recommend the thesis by Kaplan-Kelly~\cite{kaplan2024right}, where she gives a detailed description of the stellated triangulation. See Figure~\ref{Fig:stellated} for a schematic picture of the bipyramid decomposition of link complements in thickened surfaces.

\begin{figure}
	\centering
	\includegraphics[width=0.6\linewidth]{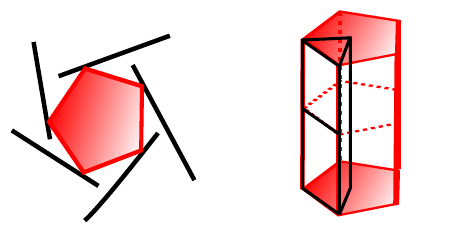}
	\caption{For each region, we have the number of edges equal the number of tetrahedra. These tetrahedra form a bipyramid in the stellated triangulation of link complement in thickened surface.}
	\label{Fig:stellated}
\end{figure}

Each truncated octahedron can be thought of as union of 4 truncated tetrahedra as in Figure~\ref{Fig:decomp}, top, and they all intersect in the crossing arc.

Or alternatively, we can directly start with 4 prism shaped objects for each crossing, and deform each prism to be a partially truncated tetrahedra. See Figure~\ref{Fig:deform_to_tet} for the deformation. Note that Weeks~\cite{weeks2005computation} also used this deformation for computation of the hyperbolic structure for classical knot complements. See also Figure~\ref{Fig:2Ddeform} for the two-dimensional picture for the deformation.
\begin{figure}
	\centering
	\includegraphics[width=0.4\linewidth]{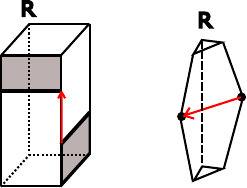}
	\caption{Weeks' deformation to get a triangulation for link complement}
	\label{Fig:deform_to_tet}
\end{figure}

\begin{figure}
	\centering
	\includegraphics[width=0.5\linewidth]{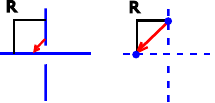}
	\caption{Two dimensional view of the deformation to triangulation}
	\label{Fig:2Ddeform}
\end{figure}


\subsection{The nonorientable case} \label{subsec:nonoct}
As the chunk decomposition for nonorientable case, we can also obtain an octahedral decomposition of a link complement in a twisted $I$-bundle over a nonorientable surface.

\begin{observation}
	The construction is nearly identical to Construction~\ref{const: octa_decomp}, but now for a diagram on a nonorientable surface $S$. We take the link exterior inside a twisted $I$-bundle $S\widetilde{\times}I$:
	\begin{enumerate}
		\item We have one octahedron for each crossing.
		\item We drag all four of the middle four vertices to $\widetilde{S}=\bdy S\widetilde{\times}I$, where opposite pairs of middle vertices are pulled in the same direction (up or down), in a neighborhood of the crossing.
		\item Each triangular face is glued to a neighboring one, same as in Figure~\ref{fig:gluing}.
		\item The order of gluings is determined by following the boundary curve of each region. See Figure~\ref{Fig:stellatednon}.
	\end{enumerate}
\end{observation}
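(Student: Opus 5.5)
The observation asserts that the construction for $S\widetilde{\times}I$ goes through as in Construction~\ref{const: octa_decomp}. The cleanest way to justify it is to lift to the orientation double cover and descend. Let $p:\widetilde{S}\to S$ be the orientable double cover. Then $S\widetilde{\times}I$ is the quotient of $\widetilde{S}\times I$ by the free involution $\tau(x,t)=(\sigma(x),-t)$, where $\sigma$ is the deck transformation. Accordingly $\partial(S\widetilde{\times}I)\cong\widetilde{S}$.

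\textbf{Step 1: lift the diagram.} The preimage $\widetilde{L}=p^{-1}(L)\subset\widetilde{S}\times I$ has a diagram $p^{-1}(\pi(L))$ on $\widetilde{S}\times\{0\}$ with $2c$ crossings. This lifted diagram is $\tau$-invariant. The involution $\tau$ swaps the two lifts of each crossing and exchanges the top and bottom levels; since $\sigma$ reverses orientation, $\tau$ also preserves over/under information at crossings.

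\textbf{Step 2: build and descend the decomposition.} Apply Construction~\ref{const: octa_decomp} to $\widetilde{L}$, giving $2c$ octahedra. I claim the construction can be made $\tau$-equivariant. For each crossing $X$, the involution $\tau$ maps the octahedron at one lift $\widetilde{X}_1$ to the octahedron at the other lift $\widetilde{X}_2$. It sends the overcrossing vertex to the overcrossing vertex, and sends the middle vertices pulled up to $\widetilde{S}\times\{1\}$ onto middle vertices pulled down to $\widetilde{S}\times\{-1\}$ (and vice versa). So the middle-vertex assignment at $\widetilde{X}_2$ is just the $\tau$-image of the assignment at $\widetilde{X}_1$. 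The face gluings of item (3) are defined locally through the shared edge $Lv_+v_-$ over each region, so they commute with $\tau$.

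Since $\tau$ acts freely on the octahedra, the quotient is a decomposition of $S\widetilde{\times}I\setminus L$ into $c$ octahedra, one per crossing. This gives item (1). Projected to the boundary $\widetilde{S}$, each octahedron has one opposite pair of middle vertices going to one sheet near the crossing and the other pair going to the other sheet. This is exactly item (2): opposite pairs go in the same direction locally, while there is no global up or down.

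\textbf{Step 3: the gluing order.} Items (3) and (4) come from item (4) of Construction~\ref{const: octa_decomp} upstairs. Around a region $R$ of $\pi(L)$ (a disc, since the diagram is cellular), the lift $\widetilde{R}$ is a single disc. The $n$ octahedra surrounding $\widetilde{R}$ glue cyclically around the edge through $\widetilde{R}$, in the order given by the boundary of $\widetilde{R}$. This order projects to the boundary order of $R$.

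\textbf{Main obstacle.} The hardest step is checking that the choice in Step 2 of which opposite pair goes ``up'' can be made consistently once we pass to the quotient. I would settle this locally. Upstairs, the choice is forced by the checkerboard colouring: the pair in the shaded regions goes up. A nonorientable $S$ may have no global checkerboard colouring, but $\widetilde{S}$ does, and $\tau$ preserves it. This holds because the alternating condition is preserved by $\tau$: $\sigma$ reverses orientation and the $t$-flip swaps over and under, so the two changes cancel. Hence the rule ``shaded goes up'' is $\tau$-equivariant, and that is exactly what the descent in Step 2 needs.
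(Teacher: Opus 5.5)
Lifting to $\widetilde S\times I$ and descending is a different route from the paper. The paper gives no argument beyond the local description: it performs the construction in a product chart around each crossing, which is what ``in a neighborhood of the crossing'' in item (2) means. Your route can work, but the step you single out as the crux is resolved incorrectly. You argue, correctly, that $\sigma$ preserves the checkerboard colouring of $\widetilde S$, because orientation reversal and the over/under swap cancel. But $\tau$ also exchanges the sheets $t=1$ and $t=-1$. So $\tau$ carries ``shaded pair at $\widetilde X_1$ pulled to $t=1$'' to ``shaded pair at $\widetilde X_2$ pulled to $t=-1$''. Your rule, however, pulls the shaded pair at $\widetilde X_2$ to $t=1$. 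A colour-based rule is therefore anti-equivariant: the decomposition you build upstairs is not $\tau$-invariant and does not descend. Note also that a $\sigma$-invariant colouring descends to $S$; what $S\widetilde{\times}I$ lacks is a global ``up'', not a colouring. The same confusion appears in Step~2. There $\tau$ does not send the overcrossing vertex to the overcrossing vertex: since it negates the $I$-coordinate, it sends the overcrossing point above $\widetilde X_1$ to the undercrossing point above $\widetilde X_2$, as your own last paragraph says.

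The missing idea is that the rule must be phrased via over/under, which $\tau$ swaps, not via colour, which $\tau$ preserves. This also matches the construction itself. By item (4), each equatorial edge joins an up vertex to a down vertex and becomes the vertical edge through a region. So equatorial edges correspond to the corners at the crossing, and the middle vertices sit on the four half-edges. An opposite pair therefore lies on a single strand; the middle vertices do not correspond to regions. Take the rule: the two middle vertices on the understrand go to the sheet on the overstrand's side, and the two on the overstrand go to the other sheet. Since $\tau$ maps the understrand at $\widetilde X_1$ to the overstrand at $\widetilde X_2$ and $t=1$ to $t=-1$, this rule is carried to itself. Your descent then goes through, including the face gluings. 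Equivalently, the rule is unchanged when the local $I$-direction is reversed, which is why the paper can build the decomposition directly in $S\widetilde{\times}I$, chart by chart, without the cover.
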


\begin{figure}
	\centering
	\includegraphics{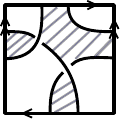}
	\caption{The boundary of a region determines the order of gluing of the octahedra}
	\label{Fig:stellatednon}
\end{figure}

See Figure~\ref{fig:octnonorientable} for a schematic picture of the octahedral decomposition of the link $2_6$ from Matveev-Nabeeva~\cite{MatveevNabeeva} in $K\widetilde{\times} I$

\begin{figure}
	\centering
	\includegraphics[width=0.5\linewidth]{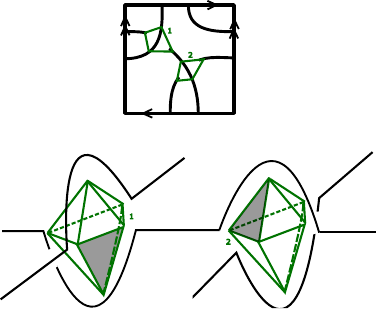}
	\caption{The octahedral decomposition from a simple diagram on a Klein bottle.}
	\label{fig:octnonorientable}
\end{figure}

\begin{theorem}
	The link exterior inside $S\widetilde{\times} I$ admits a truncated octahedral decomposition, and hence a stellated triangulation and an associated cube complex.
\end{theorem}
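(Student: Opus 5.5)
The plan is to reduce to the orientable case via the orientation double cover, and then check that all the local gluing data descend. Let $p\colon \widetilde{S}\to S$ be the orientation double cover. The twisted $I$-bundle $S\widetilde{\times} I$ is the quotient of $\widetilde{S}\times I$ by the free involution $\tau(x,t)=(\sigma(x),-t)$, where $\sigma$ is the deck transformation. Its boundary is $\widetilde{S}\times\{\pm1\}/\tau\cong\widetilde{S}$. The preimage of $L$ is a link $\widetilde{L}\subset\widetilde{S}\times I$, and $\tau$ preserves it. Its diagram $p^{-1}(\pi(L))$ has $2c$ crossings and is again alternating. The alternation survives because $\tau$ reverses the $I$-direction at the same time as it reverses the local orientation of the surface, so over/under is read off consistently in the cover. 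This is the same mechanism Purcell and Su~\cite{purcell2024alternating} use for the chunk decomposition.

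First I apply Construction~\ref{const: octa_decomp} to $\widetilde{L}\subset\widetilde{S}\times I$. This gives $2c$ truncated octahedra, one per crossing, glued along faces as in Figure~\ref{fig:gluing}, with one vertical edge through each region.

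The key step is to choose this decomposition $\tau$-equivariantly. Each octahedron is determined by the combinatorics of its crossing: the overstrand vertex, the understrand vertex, and the four middle vertices, which are pulled up or down according to the checkerboard pattern. The involution $\tau$ sends a crossing to its partner crossing, swaps $S\times\{1\}$ with $S\times\{-1\}$, and swaps over with under. We therefore need the rule ``opposite middle vertices go in the same direction'' to be compatible with $\tau$. This holds because $\tau$ exchanges the two checkerboard colours locally, exactly as it exchanges top and bottom. So $\tau$ carries the octahedron at $X$ isometrically onto the octahedron at $\sigma(X)$. It also carries face gluings to face gluings, and the vertical edge in a region $R$ to the vertical edge in $\sigma(R)$, with orientation reversed.

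Since $\tau$ acts freely and permutes the octahedra without fixing any cell, the quotient inherits a decomposition into $c$ truncated octahedra. This is the decomposition described in the Observation, and the gluing order around each region edge is read off from the boundary curve of the region, as in Figure~\ref{Fig:stellatednon}. The truncations at the middle vertices tile $\widetilde{S}\times\{\pm1\}/\tau=\partial(S\widetilde{\times}I)$, and the truncations at the top and bottom vertices tile $\partial N(L)$. This proves the first claim.

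The remaining claims then follow formally from the octahedral decomposition. Cutting each octahedron along its crossing arc, as in Theorem~\ref{thm:4term}, gives four partially truncated tetrahedra, and these assemble into the stellated or bipyramid triangulation. The cutting is $\tau$-equivariant, since $\tau$ sends crossing arcs to crossing arcs. Similarly, the cube subdivision of Figure~\ref{Fig:cubeoct}, after coning the single boundary surface $\partial(S\widetilde{\times}I)\cong\widetilde{S}$ to a point, is $\tau$-invariant. It therefore descends to a cube complex on $M^*\setminus N(L)$.

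I expect the main obstacle to be the coning step. In the orientable case there are two cone points, $P_+$ and $P_-$. Here the boundary is connected, so there is a single cone point, and it is the image of both $P_+$ and $P_-$. I must check that the descended cells are still genuine cubes, that is, no cube gets glued to itself along a face. This follows because $\tau$ swaps the upper and lower halves of each octahedron and never maps a cube to itself. I would verify it by labelling the eight cubes of an octahedron by (upper/lower half) $\times$ (quadrant) and checking that $\tau$ moves every such label to a different octahedron.
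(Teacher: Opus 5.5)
Your route differs from the paper's. The paper builds the decomposition directly on $S\widetilde{\times} I$: one octahedron per crossing, middle vertices dragged to the connected boundary $\widetilde S$ with opposite pairs going the same way near the crossing, and faces glued in the order read off from the region boundaries. It uses the double cover only afterwards, to transfer nonpositive curvature. Building everything $\tau$-equivariantly on $\widetilde S\times I$ and descending is a legitimate way to make the locally defined up/down choices rigorous.

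\textbf{The gap.} The reason you give for equivariance is false: $\tau$ does \emph{not} exchange the checkerboard colours. For an alternating diagram on the oriented surface $\widetilde S$, one colour class consists of the regions whose corners are swept when the overstrand is rotated anticlockwise onto the understrand. At partner crossings, $\sigma$ reverses the orientation and $\tau$ swaps over and under. These are the two reversals you yourself point out, and they cancel, so corners go to corners of the same type and $\sigma$ preserves the colouring. As a sanity check, take two projective lines meeting once in $\mathbb{RP}^2$: the antipodal map sends each of the four lunes on $S^2$ to the opposite lune, which has the same colour. This invariance is also why alternating diagrams on nonorientable $S$ are checkerboard colourable. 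So if the middle vertices really were sent up or down according to colour, $\tau$ (which preserves colour but swaps $\widetilde S\times\{1\}$ with $\widetilde S\times\{-1\}$) would \emph{not} respect the decomposition. Your argument breaks at exactly the step you identify as key.

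\textbf{The repair.} The middle vertices are not indexed by regions at all. The edges of the middle square are the region edges; this is how all $n$ octahedra around an $n$-gon meet the single vertical edge through it, and how the Dehn complex arises. So the four corners sit on the four half-edges of the diagram at the crossing, and opposite corners are the two half-edges of the same strand. Thus ``opposite pairs go the same way'' means the corners along the overstrand go to one boundary level and those along the understrand go to the other. Since $\tau$ swaps over and under between $\widetilde X$ and $\sigma(\widetilde X)$ while swapping the two boundary levels, it carries the octahedron at $\widetilde X$ onto the one at $\sigma(\widetilde X)$: top vertex to bottom vertex, and up-corners to down-corners. With this in place, the rest of your descent argument goes through.

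\textbf{A smaller point.} ``$\tau$ never maps a cube to itself'' does not rule out a cube being glued to itself in the quotient; for that you need $Q$ and $\tau Q$ to share no face. For the eight-cubes-per-octahedron subdivision this does hold:
\begin{itemize}
\item each cube has exactly one coned vertex;
\item every face it shares with a cube of another octahedron contains that vertex;
\item gluings upstairs preserve $P_+$ versus $P_-$;
\item $\tau$ swaps $P_+$ and $P_-$.
\end{itemize}
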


\begin{lemma}
	Let $\widetilde{S}$ be the orientable double cover of a nonorientable surface $S$, and $\widetilde{L}$ the double cover of a link $L\subset S\widetilde{\times} I$. If $\widetilde{S} \times I \setminus \widetilde{L}$ is nonpositively curved, then $S\widetilde{\times} I\setminus L$ is nonpositively curved.
\end{lemma}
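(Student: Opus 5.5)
The idea is that nonpositive curvature, being a local condition (Gromov's link condition), passes down a covering map. I interpret the statement as concerning the cube complexes: $\widetilde{\mathcal{C}}$, associated to the octahedral decomposition of $\widetilde{S}\times I\setminus\widetilde{L}$ (with $\widetilde{S}\times\{\pm1\}$ coned), and $\mathcal{C}$, associated to $S\widetilde{\times} I\setminus L$ (with the single boundary surface $\partial(S\widetilde{\times}I)$ coned to one vertex).

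First, I would show that the orientation double cover $p:\widetilde{S}\times I\to S\widetilde{\times}I$ respects the decompositions. The preimage of the diagram $\pi(L)$ on $S$ is the diagram $\pi(\widetilde{L})$ on $\widetilde{S}$. It is still alternating, because the deck involution acts on the $I$-factor by $t\mapsto -t$ while reversing the orientation of $\widetilde S$, so over/under data lift consistently. Each crossing $X_i$ has two preimages, and the octahedron at $X_i$ lifts to two octahedra. The face gluings of Figure~\ref{fig:gluing}, determined by walking around region boundaries as in Figure~\ref{Fig:stellatednon}, lift to the gluings of Construction~\ref{const: octa_decomp}. Hence the deck involution $\tau$ acts freely and cellularly on the octahedral decomposition, and so on its cubical subdivision, with quotient the decomposition of $S\widetilde{\times}I\setminus L$.

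Next comes the coning, which I expect to be the main obstacle. In $\widetilde{S}\times I$, the involution $\tau$ swaps $\widetilde S\times\{1\}$ and $\widetilde S\times\{-1\}$. It therefore interchanges the cone points $P_+$ and $P_-$, with no fixed vertex. The boundary $\partial(S\widetilde\times I)$ is connected and is identified with $\widetilde S$, so coning it gives a single vertex $P$. The identification $\{P_+,P_-\}\to P$ is exactly the quotient by $\tau$. I would check that $\tau$ acts freely on all cubes of $\widetilde{\mathcal{C}}$: it has no fixed points in $\widetilde S\times I\setminus\widetilde L$, and it swaps the two cone vertices. So the map $p:\widetilde{\mathcal{C}}\to\mathcal{C}$ is a genuine 2-sheeted covering of cube complexes, a local isomorphism on cubes. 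The delicate point is that near $P$ no two corners of a single cube of $\mathcal{C}$ at $P$ come from both $P_+$ and $P_-$ in a way that would create a non-simplicial link. To rule this out, I would observe that every cube has exactly one corner at a cone vertex, since each cube of Figure~\ref{Fig:cubeoct} has one corner at a middle vertex.

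Finally, with $p$ a covering, $Link(P)$ in $\mathcal{C}$ is isomorphic to $Link(P_+)$ in $\widetilde{\mathcal{C}}$, which is also isomorphic to $Link(P_-)$ via $\tau$. Similarly, the link of a vertex $P_0$ on $\partial N(L)$ is isomorphic to the link of either of its lifts. By hypothesis, $\widetilde{\mathcal{C}}$ is nonpositively curved, for example via Proposition~\ref{prop:npc} applied to $\widetilde L$. Its vertex links are therefore flag simplicial complexes, and so are those of $\mathcal{C}$. Proposition~\ref{prop:flag} then gives that $\mathcal{C}$ is nonpositively curved. The same argument applies to the double across $\partial E(L)$.
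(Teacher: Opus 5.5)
Your argument is correct, but it takes a different route from the paper's.

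\textbf{Comparison with the paper.} The paper argues globally. Nonpositive curvature of $\widetilde{S}\times I\setminus\widetilde{L}$ means its universal cover is $CAT(0)$. Since this space double covers $S\widetilde{\times} I\setminus L$ by \cite[Proposition~2.18]{purcell2024alternating}, the two share a universal cover, which is therefore $CAT(0)$.

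You argue locally instead. You make the decomposition $\tau$-equivariant and check that $\tau$ acts freely, including that it swaps $P_+$ and $P_-$. Hence $p$ is a covering of cube complexes, vertex links are isomorphic, and Gromov's condition descends.

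The paper's version is shorter, but it leaves three points implicit:
\begin{itemize}
\item the cube metric downstairs must lift to the one upstairs, which is exactly your equivariance check;
\item the coned spaces must still form a double cover, which you verify;
\item Cartan--Hadamard is needed to pass from a $CAT(0)$ universal cover back to local nonpositive curvature.
\end{itemize}
Your version is self-contained and gives the explicit identification $Link(P)\cong Link(P_+)$, both being the subdivided dual cell structure on $\widetilde{S}$. You use the ``only if'' half of Gromov's criterion (nonpositively curved implies flag links). That half is valid, since \cite[5.20]{bridson2013metric} is an equivalence, although Proposition~\ref{prop:flag} quotes only the ``if'' direction.

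\textbf{A false step you should remove.} It is not true that every cube of $\mathcal{C}$ has exactly one corner at a cone vertex. $\mathcal{C}$ is obtained by cutting each octahedron along its projection-level middle square. So each cube is a truncated square pyramid, with one face on $\partial N(L)$. The opposite face is a Dehn square, and all four of its corners are cone vertices: they alternate $P_+,P_-$ in $\widetilde{\mathcal{C}}$ and all equal $P$ in $\mathcal{C}$.

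Thus the situation you tried to exclude, corners of one cube at $P$ coming from both $P_+$ and $P_-$, really occurs. It is harmless, and no corner count is needed:
\begin{itemize}
\item No cube is $\tau$-invariant, since an automorphism of a cube fixes its centre, contradicting freeness.
\item So each cube $Q$ of $\mathcal{C}$ has two distinct lifts $\widetilde{Q}$ and $\tau\widetilde{Q}$.
\item Each corner of $Q$ at $P$ lifts to a corner at $P_+$ in exactly one of these lifts.
\item This bijection between corners at $P$ and corners at $P_+$ respects face incidences because $p$ is a covering, and that is precisely $Link(P)\cong Link(P_+)$.
\end{itemize}
With this replacing the false observation, the proof goes through.
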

\begin{proof}
	By definition of nonpositive curvature, $\widetilde{S} \times I \setminus \widetilde{L}$ has a $CAT(0)$ space as its universal cover. Then since $\widetilde{S} \times I \setminus \widetilde{L}$ doubly covers $S\times I\setminus L$ by~\cite[Proposition~2.18]{purcell2024alternating}, the universal cover of $S\times I\setminus L$ is also $CAT(0)$.
\end{proof}

\section{Further thoughts and open questions}\label{sec:open}

\subsection{Pseudo hyperbolic structure}

The octahedral decomposition and its resulting triangulation can be lifted to hyperbolic 3-space to obtain a fundamental domain and the group action that shows the gluings. It would be interesting to generalise work of Kim-Kim-Yoon~\cite{kim2018octahedral} to obtain explicit info on representations and hyperbolic structure. But for $S\times I$ with high genus $S$, it is harder to calculate the hyperbolic structure since we have totally geodesic boundary components.

In Sakuma-Yokota's work~\cite{sakuma2018application}, they obtain a geometric solution associated to the triangulation coming from the octahedral decomposition, yet their technique only works for classical alternating diagrams. On the other hand, if we follow Kim-Kim-Yoon~\cite{kim2018octahedral} to put labels on each edge of the octahedral decomposition, we obtain a system of nonlinear equations. However, we need to add extra edges coming from the totally geodesic components in order to generalise.

\subsection{The interaction of chunk decomposition and octahedral decomposition}
The checkerboard surfaces can be seen inside each octahedron, as in Construction~\ref{const: octa_decomp}. From Purcell-Su~\cite{purcell2024alternating}, we can see that the checkerboard decomposition is done by cutting through both checkerboard surfaces to obtain top and bottom polyhedra for classical case and chunks for the generalised case. Following Sakai-Sakuma~\cite{sakai2024two}, one can see inside each octahedron how the checkerboard surfaces cut through the entire space and how the pieces are decorated by the knot diagram.

Therefore, even if we have chunks (homeomorphic to $S\times I$) instead of polyhedra, we can still see the pieces inside each octahedron and how they are glued together to form the chunks, only now we are looking ``sideways". Half of them will glue together to give one chunk, and the other half the other, or one chunk for nonorientable case. See Figure~\ref{Fig:chunkincube} for the pieces of chunks inside each octahedron. Note that it is the same picture for classical case and the case of links in thickened surfaces, orientable or not.

\begin{figure}
	\centering
	\includegraphics{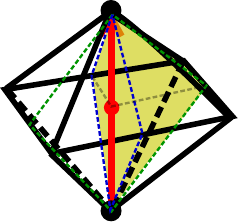}
	\caption{Pieces of chunk inside each octahedron. The yellow shaded part denote one piece cut by the checkerboard hyperplanes inside each octahedron.}
	\label{Fig:chunkincube}
\end{figure}

Since we already have a nonpositive curvature theorem, we can almost run the same process as Sakai-Sakuma~\cite{sakai2024two} by cutting the cubes into pieces and part or all of them will glue to become chunks. After further cutting our cube complex $\mathcal{C}$, we can lift the cubes to the $CAT(0)$ universal cover. The difference is, after coning, we have extra information on the middle vertices. This is another piece of work that can be done.

\newpage
\bibliographystyle{apalike}
\bibliography{ref}

\end{document}